\newcommand{\C}{\Bbb C}
\newcommand{\R}{\Bbb R}
\newcommand\rw{\rightarrow}
\newcommand\lge{\langle}
\newcommand\rg{\rangle}
\newtheorem{theorem}{Theorem}[section]
\newtheorem{proposition}[theorem]{Proposition}
\newtheorem{corollary}[theorem]{Corollary}
\newtheorem{lemma}[theorem]{Lemma}
\newtheorem{definition}[theorem]{Definition}
\begin{document}
\title{\bf Vanishing Theorems on Compact Hyperk\"ahler Manifolds
\thanks{2000 {\it Mathematics Subject Classification:}14F17, 32L10, 53C43, 58E20}}
\author{
Qilin Yang
 \thanks{ Department of Mathematics,
Sun Yat-Sen University, ~510275, ~ Guangzhou, ~P. R.  CHINA.}
\thanks{{E-mail:~yqil@mail.sysu.edu.cn.}}}
\date{}
\maketitle

\pagenumbering{arabic}
 \setcounter{page}{1}
 \begin{center}
\begin{minipage}{13cm}
ABSTRACT. We prove that if $B$ is a $k$-positive holomorphic line
bundle on a compact hyperk\"ahler manifold $M,$ then $H^p
(M,\Omega^q\otimes B)=0$  for $p>n+[\frac{k}{2}]$  and any
nonnegative integer $q.$ In a special case $k=0$ and $q=0$ we
recover a vanishing theorem of Verbitsky's with a little stronger
 assumption.
\end{minipage}
\end{center}
\section{Introduction}
A hyperk\"ahler manifold is an oriented $4n$-dimensional  Riemannian
manifold with a special holonomy group $Sp(n)\subset SO(4n).$ The
holonomy group $U(n)\subset SO(2n)$ corresponds to a K\"ahler
manifold. The unitary group $U(n)$ is exactly the subgroup of
$SO(2n)$ that preserves a complex structure which together with a compartible
Riemannian metric defines a symplectic form. Hence a K\"ahler
manifold can also be defined as a Riemannian manifold with compatible
 symplectic structure and complex structure. By the same
reasoning, $Sp(n)\subset SU(2n)$ is a subgroup exactly preserving
three complex structures $I,J,K$ with $IJ=-JI=K.$ As the name suggests,
a hyperk\"ahler manifold is also characterized as a Riemannian
manifold with three compatible complex structures $I,J,K$ with $IJ=-JI=K,$ and a
compatible  symplectic form which is K\"ahler with respect to each one
of $I,J,K.$

A hyperk\"ahler manifold $M$ is called irreducible if $H^1(M)=0$ and
$H^{2,0}(M)=\C.$ By Bogomolov's decomposition theorem for a K\"ahler
manifold with trivial canonical class \cite{bo1},\cite{bea}, up to
finite cover, any hyperk\"ahler manifold  is biholomorphic to a
product of irreducible hyperk\"ahler manifolds and a hyperk\"ahler
complex torus. On any hyperk\"ahler manifold $M$ there is a symmetric bilinear form $q,$ called
{\it Beauville-Bogomolov-Fujiki form}, which takes positive values on the K\"ahler cone $\mathscr{K}$ of $(M,I).$
The closure  of the  dual K\"ahler cone
 $\mathscr{K}^{\vee}=:\{x\in H^{1,1}(M,\R)|q(x,y)>0,\forall y\in \mathscr{K}\}$ is denoted by
  $\overline{\mathscr{K}^{\vee}}.$
In \cite{ve1}, Verbitsky established the following
vanishing theorem for a compact irreducible hyperk\"ahler manifold:

\begin{theorem}{\rm ({\bf Verbitsky, 2007},\cite{ve1})}\label{vbts} Let $M$ be a compact irreducible hyperk\"ahler
manifold of real dimension $4n$, and let $L$ be a holomorphic line
bundle on $M.$ If   $0\not=c_1(L)\in \overline{\mathscr{K}^{\vee}},$
in particular, if $L$ is a positive line bundle, then
\begin{equation}H^p(M,L)=0,~~~~~{\mbox for}~~p>n.\end{equation}
\end{theorem}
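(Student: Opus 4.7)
My approach is to exploit the extra symmetries of a hyperk\"ahler manifold: the three K\"ahler forms $\om_I,\om_J,\om_K$ give rise, together with the Lefschetz operators $L_I,L_J,L_K$ and their $L^2$-adjoints $\Lm_I,\Lm_J,\Lm_K,$ to an action of $\mathfrak{so}(4,1)$ on differential forms in the sense of Verbitsky--Looijenga--Lunts. Its compact $\mathfrak{su}(2)$-subalgebra, the ``Verbitsky action'', commutes with each of the three Laplacians and hence preserves harmonicity on the untwisted level. The plan is to transport this action to $L$-valued cohomology under the hypothesis $c_1(L)\in\ov{\mathscr{K}^{\vee}}$ and then extract the vanishing from the resulting weight structure.

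\emph{Step 1 (decomposing $c_1(L)$).} Use the $q$-orthogonal splitting $H^2(M,\R)=SH^2(M,\R)\oplus\lge\om_I,\om_J,\om_K\rg,$ where $SH^2$ is the $SU(2)$-invariant part; note that $SH^2\subset H^{1,1}_{I_t}$ for every twistor complex structure $I_t.$ Then $c_1(L)\in H^{1,1}_I$ decomposes uniquely as $c_1(L)=\xi+t\om_I$ with $\xi\in SH^2$ and $t\in\R.$ Negative-definiteness of $q$ on $SH^2,$ positivity of $q(\om_I,\om_I),$ and the hypothesis $q(c_1(L),\om)\geq0$ for $\om\in\mathscr{K}$ together force $t\geq 0.$ Verbitsky's theory shows that any line bundle with Chern class in $SH^2$ admits a \emph{hyperholomorphic} Hermitian structure, i.e.~a Chern connection whose curvature is $SU(2)$-invariant.

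\emph{Step 2 (Bochner and weight argument).} Choose a Hermitian metric $h$ on $L$ whose Chern curvature has the form $\xi_0+t\om_I$ with $\xi_0$ an $SU(2)$-invariant representative of $\xi$ (available by the $dd^c$-lemma). On a harmonic $(0,p)_I$-form $\eta$ with values in $L,$ the Bochner--Kodaira--Nakano identity produces a Nakano-type nonnegative term $t(p-n)\|\eta\|^2$ from the K\"ahler piece $t\om_I,$ while the hyperholomorphic piece of $\Delta_L$ commutes with the $\mathfrak{su}(2)$-action on $L$-twisted forms. For $p>n,$ strict positivity $t>0$ already forces $\eta=0;$ in the remaining case $t=0,$ the class $c_1(L)=\xi\not=0$ is hyperholomorphic, and repeated application of the raising operator of the $\mathfrak{su}(2)$-action to $\eta$ preserves harmonicity while shifting the $I$-bidegree into configurations that eventually exceed the total-degree bound $4n,$ forcing $\eta=0.$ Nontriviality of $\xi$ rules out the trivial $\mathfrak{su}(2)$-summand that would otherwise survive the weight argument.

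The chief technical difficulty will lie in Step 2, where one must rigorously establish commutation of the operators $L_J,L_K,\Lm_J,\Lm_K$ with $\Delta_L$ under the hyperholomorphic assumption, track how they shift $I$-bidegree on $L$-valued forms, and carefully handle the boundary case $q(c_1(L),c_1(L))=0,$ most likely via a limiting argument along approximating classes in the interior of $\ov{\mathscr{K}^{\vee}}.$
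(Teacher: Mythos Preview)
First, a point of framing: the paper does \emph{not} contain a proof of Theorem~\ref{vbts}. It is cited as Verbitsky's result, and the paper only remarks that Verbitsky's argument ``is a clever use of the symmetric [role] of the complex structures and the holomorphic Bochner--Kodaira--Nakano type identity'' with irreducibility used essentially. The paper's own contribution is a \emph{different} vanishing result (Theorem~\ref{mainr}, Corollary~\ref{lste}) which, in the special case $q=0$ and $L$ positive, recovers only the positive-line-bundle case of Theorem~\ref{vbts}. Its method is to derive a new Bochner--Kodaira--Nakano identity $\triangle''-\triangle'_J=[e(\Theta_J),\Lambda_J]$ using the $J$-twisted operators, and then to compute $[e(\Theta_J),\Lambda_J]$ explicitly (Proposition~\ref{naj}, formula~(\ref{213})). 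The key outcome of that computation is that on $(q,p)$-forms the curvature contribution scales like $2p-2n$ rather than the usual $p+q-2n$; this is what allows vanishing for $p>n$ instead of $p>2n$. The general hypothesis $c_1(L)\in\ov{\mathscr{K}^{\vee}}$ is never addressed by the paper's methods.

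Your proposal aims at the full Verbitsky statement and is much closer in spirit to Verbitsky's original approach than to the paper's. Step~1 is essentially correct. However, Step~2 contains a genuine gap at the most important point. You assert that ``the Bochner--Kodaira--Nakano identity produces a Nakano-type nonnegative term $t(p-n)\|\eta\|^2$ from the K\"ahler piece $t\om_I$''. The \emph{standard} Bochner--Kodaira--Nakano identity on a complex manifold of complex dimension $2n$ gives, for curvature $t\om_I$ acting on a $(0,p)$-form, the term $t(p-2n)\|\eta\|^2$, which is nonpositive for all $p\le 2n$ and yields nothing. Obtaining $(p-n)$ in place of $(p-2n)$ is exactly the nontrivial content that requires either Verbitsky's quaternionic Dolbeault machinery or, equivalently, the $\Lambda_J$-identity developed in this paper; you have not invoked or established any such identity, so the term you need is simply unjustified. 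Without it, the $t>0$ case collapses. The $t=0$ case is also only gestured at: ``shifting the $I$-bidegree into configurations that eventually exceed the total-degree bound $4n$'' is not an argument, and you have not explained why nontriviality of $\xi$ (as opposed to $L$ itself) excludes the trivial $\mathfrak{su}(2)$-summand on \emph{twisted} harmonic forms.
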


Verbitsky's proof of the theorem above is a clever use of the symmetric
pole of the complex structures and the holomorphic Bochner-Kodaira-Nakano
type identity, which appeared already in \cite{fu}. In the proof the
assumption of irreducibility was used in an essential way. In this
paper we use a different method, to establish some vanishing theorems for more general hypercomplex K\"ahler
manifolds. To get the flavor, we state the
following  result:

 \begin{theorem}\label{maincor} Let $M$ be a compact hyperk\"ahler
manifold of real dimension $4n$, and let $L$ be a holomorphic line
bundle on $(M,I).$ If $L$ is a positive line bundle, then for any
$q\geq 0,$
\begin{equation} H^p(M,\Omega^q\otimes L)=0,~~~{\mbox {for}}~~~p>n.\end{equation}
\end{theorem}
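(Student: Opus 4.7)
The plan is to reduce Theorem~\ref{maincor} to the classical Akizuki--Nakano vanishing theorem on $(M,I)$ by exploiting a self-duality $\Omega^q\cong\Omega^{2n-q}$ of the holomorphic form sheaves that is specific to compact hyperk\"ahler manifolds. Once this self-duality is in hand, one application of Akizuki--Nakano to $\Omega^q$ and a second to $\Omega^{2n-q}$ together cover the range $p>n$ for every $q\geq 0$.

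To construct the self-duality, I would start from the holomorphic symplectic form $\sigma:=\omega_J+i\omega_K\in H^0(M,\Omega^2)$. Because $\sigma$ is parallel for the Levi--Civita connection, it is holomorphic in the complex structure $I$ and nowhere degenerate as an alternating form on $T^{1,0}M$; the contraction $v\mapsto\iota_v\sigma$ thus gives a holomorphic isomorphism $T^{1,0}M\cong\Omega^1$, hence $\wedge^q T^{1,0}M\cong\Omega^q$. The top power $\sigma^n/n!$ is a nowhere-vanishing holomorphic section of $K_M=\Omega^{2n}$, so $K_M$ is holomorphically trivial, and the Poincar\'e pairing on $T^{1,0}M$ gives $\wedge^q T^{1,0}M\cong\Omega^{2n-q}\otimes K_M^{-1}\cong\Omega^{2n-q}$. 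Composing, $\Omega^q\cong\wedge^q T^{1,0}M\cong\Omega^{2n-q}$ as holomorphic bundles on $(M,I)$.

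To conclude, I apply Akizuki--Nakano on the compact K\"ahler manifold $(M,I,\omega_I)$ of complex dimension $2n$ with the positive line bundle $L$: $H^p(M,\Omega^r\otimes L)=0$ whenever $p+r>2n$. Fix $p>n$ and $q\geq 0$. If $q\geq p$, then $p+q\geq 2p>2n$, and Akizuki--Nakano applies directly. If $q<p$, use the self-duality to rewrite $H^p(M,\Omega^q\otimes L)\cong H^p(M,\Omega^{2n-q}\otimes L)$; Akizuki--Nakano applies to the right-hand side because $p+(2n-q)>2n$ is equivalent to $p>q$, which holds. Either way the cohomology vanishes.

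The proof is short because the hyperk\"ahler structure packages both the symplectic isomorphism $T^{1,0}\cong\Omega^1$ and the triviality of $K_M$ into a single Hodge-star-type duality on all exterior powers; the only point deserving care is checking that the identifications are holomorphic in $I$ rather than merely smooth, which follows from $\nabla\sigma=0$. I do not expect this shortcut to reach the $k$-positive statement advertised in the abstract: even with a Sommese-type $k$-positive Akizuki--Nakano in the form $p+r>2n+k$, the same trick only yields vanishing for $p+q>2n+k$ or $p-q>k$, leaving an intermediate range of $q$ uncovered. That is presumably why the author turns to a refined Bochner--Kodaira--Nakano identity adapted to the hyperk\"ahler structure.
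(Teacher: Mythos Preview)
Your argument is correct and is genuinely different from the paper's. The paper derives a twisted Bochner--Kodaira--Nakano identity $\triangle''-\triangle'_J=[e(\Theta_J),\Lambda_J]$ from the hypercomplex structure, computes the local expression of $[e(\Theta_J),\Lambda_J]$ (Proposition~\ref{naj}), and then shows this operator is positive on $\Omega^{q,p}(L)$ for $p>n$ after deforming the K\"ahler metric to $\widetilde\omega=i\Theta+\kappa\omega$ (the proof of Theorem~\ref{mainr}(ii) with $k=0$). Your route bypasses all of this: the holomorphic symplectic form $\sigma=\omega_J+i\omega_K$ gives the duality $\Omega^q\cong\Omega^{2n-q}$, and two invocations of Akizuki--Nakano finish the job. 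This is exactly the ``holomorphic hard Lefschetz plus Akizuki--Nakano'' shortcut that the author records in the Acknowledgments as having been pointed out by N.~C.~Leung after the fact.

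What each approach buys is clear. Yours is far shorter and uses only classical input, but it is tied to the genuine hyperk\"ahler hypothesis: you need $d\omega_J=d\omega_K=0$ so that $\sigma$ is holomorphic and the isomorphism $\Omega^q\cong\Omega^{2n-q}$ is available. The paper's machinery applies to the broader class of hypercomplex K\"ahler manifolds (only $(M,I)$ assumed K\"ahler), where $\sigma$ need not be closed, and---as you already noted---it also yields the sharper $k$-positive statement $p>n+[k/2]$, which the duality trick cannot reach.
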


Note that we don't assume $M$ is irreducible.  We recover Theorem
\ref{vbts} if $q=0$ while a stronger
 assumption. During the proof of Theorem \ref{maincor}, the K\"ahler metric of $(M,I)$ is changed, the new
K\"ahler metric is not necessarily hyperk\"ahler. So we develop our theory on hypercomplex K\"ahler manifolds,
and deal with hyperk\"ahler manifolds as their special examples. After deriving the Bochner-Kodaira-Nakano
identities from Section 2 to Section 4, we get our main results finally in Section 5.

\section{Preliminary}
\setcounter{section}{2}
\setcounter{equation}{0}
A Hermitian manifold $M$ is a complex manifold with an integrable complex structure $I$ and
a Riemannian metric $g$ satisfying the compatible  condition $g(IX,IY)=g(X,Y)$ for any $X,Y\in TM.$
It is called a K\"ahler manifold if in addition the 2-form
$\omega$ defined by $\omega(X,Y)=-g(X,IY)$ is  symplectic form. $\omega$ is also called the
{\it K\"ahler form}
associated with  $g$ and $g$ is called a {\it K\"ahler metric.} On the other hand, if we start with
 a symplectic manifold $M$ equipped
with a symplectic form $\omega$, then  $M$ is a K\"ahler manifold
if and only if there is a $\omega$-compatible
 integrable complex structure. Recall that a complex structure $I$ is called {\it $\omega$-compatible}
 if $\omega$ is {\it $I$-invariant} $\omega(IX,IY)=\omega(IX,IY)$for any $X,Y\in TM,$ and {\it $I$-tamed} $\omega(X,IX)>0$ if $X\not=0.$
  To see that two
 definitions of K\"ahler manifolds are equivalent it suffices to note that
 $g(X,Y)=\omega(X,IY)$ is a Hermitian metric in the latter definition of K\"ahler manifolds.

\begin{definition} A Riemannian manifold $M$ with  Riemannian metric $g$
is called a hyperk\"ahler manifold if it admits three integrable complex
structures $I,J,K$ with $IJ=-JI=K$ such that $g$ is a K\"ahler metric with respect to each one of $I,J,K.$ We
called $g$ a hyperk\"ahler metric.
\end{definition}

\begin{proposition} Let $(M,\omega,I)$ be a K\"ahler manifold  with Levi-Civita
connection $\nabla.$ Then $M$
is hyperk\"ahler if there are integrable complex structures $J,K$ with $IJ=-JI=K$ and $J,K$
 satisfying the following conditions:

{\rm (i).} $\omega(JX,KY)=\omega(X,IY);$

{\rm (ii).} $J,K$ are parallel: $\nabla J=\nabla K=0.$
\end{proposition}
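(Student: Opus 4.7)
The plan is to verify that $g$ is a K\"ahler metric with respect to each of $I,J,K$, since K\"ahlerity with respect to $I$ is already given and the quaternionic relations hold by hypothesis. What remains is (a) to establish the Hermitian identities $g(JX,JY)=g(X,Y)$ and $g(KX,KY)=g(X,Y)$, and (b) to show that the associated $2$-forms $\omega_J(X,Y):=-g(X,JY)$ and $\omega_K(X,Y):=-g(X,KY)$ are closed. Nondegeneracy of $\omega_J$ and $\omega_K$ is automatic from the nondegeneracy of $g$ and the invertibility of $J,K$, so no separate argument is needed for that.

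For (a), one rewrites $g$ via the K\"ahler identity $g(X,Y)=\omega(X,IY)$ (which follows from $\omega(X,Y)=-g(X,IY)$ and $I^2=-1$) and then pushes $I$ through a $J$ using $IJ=K$:
\[
g(JX,JY)=\omega(JX,IJY)=\omega(JX,KY)=\omega(X,IY)=g(X,Y),
\]
where the third equality is precisely hypothesis (i). This yields the $J$-invariance of $g$. For $K$-invariance, write $K=IJ$ and apply the (already known) $I$-invariance of $g$ to reduce to the $J$-invariance just established:
\[
g(KX,KY)=g(IJX,IJY)=g(JX,JY)=g(X,Y).
\]

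For (b), hypothesis (ii) together with the metric property $\nabla g=0$ of the Levi-Civita connection yields $\nabla \omega_J=\nabla \omega_K=0$. Since the Levi-Civita connection is torsion-free, any parallel differential form is closed (via the formula $d\alpha(X_0,\dots,X_p)=\sum_i(-1)^i(\nabla_{X_i}\alpha)(X_0,\dots,\widehat{X_i},\dots,X_p)$), so $d\omega_J=d\omega_K=0$, and $\omega_J,\omega_K$ are K\"ahler forms for $(g,J)$ and $(g,K)$ respectively. The only substantive step in the whole argument is the first computation: hypothesis (i) is essentially a restatement of $J$-Hermiticity of $g$ once one invokes $g(\cdot,\cdot)=\omega(\cdot,I\cdot)$, and the remaining verifications are formal consequences of the quaternionic relations and the parallelism of $J$ and $K$.
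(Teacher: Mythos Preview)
Your proof is correct and follows essentially the same approach as the paper: verify $J$-Hermiticity of $g$ via hypothesis (i) rewritten as $g(JX,JY)=\omega(JX,KY)=\omega(X,IY)=g(X,Y)$, then use $\nabla J=\nabla K=0$ together with $\nabla g=0$ and torsion-freeness to get $d\omega_J=d\omega_K=0$. The only cosmetic difference is that you obtain $K$-Hermiticity from $I$- and $J$-Hermiticity via $K=IJ$, whereas the paper simply says the $K$ case ``follows in the same way''; both routes are immediate.
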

\begin{proof} Let $g(X,Y)=\omega(X,IY),$ then $g$ is a K\"ahler metric by the assumptions.
  $g$ is Hermitian
relative to $J$ if
 $g(X,Y)=g(JX,JY),$ which is $\omega(JX,KY)=\omega(X,IY).$
 Let $\omega_J (X,Y)=-g(X,JY)=-\omega(X,KY).$
Clearly $\omega_J$ is nondegenerate. It is well-known that $d\omega_J=0$ if $\nabla J=0.$ Hence $g$
is K\"ahler with respect to $J$ if (i) and (ii) are true. $g$
is K\"ahler with respect to $K$ follows in the same way if (i) and (ii) are true.
\end{proof}

\begin{proposition}\label{ebas} Let $M$ be a complex symplectic manifold with an
integrable complex structure $I$ and an $I$-invariant symplectic
structure $\omega.$ Let $g(X,Y)=\omega(X,IY)$  for any $X,Y\in TM.$
Then $g$ is a Lorentz Hermitian metric and for any point $x\in M$
there exists a local holomorphic coordinate $(w^1,\cdots,w^n)$
around $x$ such that
$$g=\sum_{jk}(\pm \delta_{jk}+O(|w|^2))dw^jd\bar{w}^k.$$
\end{proposition}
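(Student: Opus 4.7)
I would dispatch the two assertions of the proposition in three stages: verifying that $g$ is a pseudo-Hermitian metric, diagonalising $g$ at the base point, and finally killing the linear Taylor terms via the K\"ahler condition.

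The first stage is a purely formal check. Using the antisymmetry of $\omega$ and the $I$-invariance $\omega(IX,IY)=\omega(X,Y)$, one computes
\begin{equation*}
g(Y,X)=\omega(Y,IX)=\omega(IY,I\cdot IX)=\omega(IY,-X)=\omega(X,IY)=g(X,Y),
\end{equation*}
and similarly $g(IX,IY)=g(X,Y)$; non-degeneracy of $g$ is inherited from that of $\omega$. Because $\omega$ is not assumed $I$-tamed, the induced Hermitian form on $T^{1,0}_x M$ may be indefinite, with some signature $(p,n-p)$, and this inertia type is the source of the signs in the statement. For the second stage I would pick any holomorphic chart $(z^1,\dots,z^n)$ vanishing at $x$: the matrix $g_{j\bk}(0)$ is a non-singular Hermitian matrix, so Sylvester's law of inertia supplies a $\C$-linear change of coordinates after which $g_{j\bk}(0)=\epsilon_j\delta_{jk}$ with $\epsilon_j\in\{+1,-1\}$.

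The crux is the third stage. Since $\omega$ is real, closed and of type $(1,1)$ (the last being equivalent to $I$-invariance for a real $2$-form), the local $\partial\bar\partial$-Lemma furnishes a real K\"ahler potential $K$ with $\omega=i\partial\bar\partial K$ near $0$. I would then exploit two freedoms to simplify $K$: adding to $K$ the real part of a holomorphic polynomial does not alter $\omega$, so one may assume $K$ has no purely holomorphic or anti-holomorphic summands; and a further holomorphic change $w=z+Q(z)$ with $Q$ homogeneous quadratic can be chosen so as to absorb the mixed cubic part of $K$, solvability being a linear-algebra exercise that depends only on the non-singularity of the matrix $(\epsilon_j\delta_{jk})$. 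Once $K$ has trivial holomorphic, anti-holomorphic and cubic summands, one has $\partial_l g_{j\bk}(0)=\partial_{\bl}g_{j\bk}(0)=0$, and the expansion $g=\sum_{jk}(\epsilon_j\delta_{jk}+O(|w|^2))\,dw^jd\bar w^k$ claimed in the proposition drops out. The whole argument is a transcription of the classical K\"ahler normal-coordinate construction; the only point demanding attention is the indefinite signature, which forces one to replace unitary diagonalisation by Sylvester's law and to carry the signs $\epsilon_j$ through the cubic elimination.
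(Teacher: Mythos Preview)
Your proposal is correct and follows the same architecture as the paper's proof: first diagonalise $g$ at $x$ via a linear holomorphic change (Sylvester's law, since $g$ is only pseudo-Hermitian), then use a quadratic holomorphic change $z\mapsto z+Q(z)$ to annihilate the first-order Taylor terms of $g_{j\bk}$.

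The one genuine difference is how the second step is organised. The paper works directly with the Taylor coefficients of $g$: writing $g_{j\bk}=\pm\delta_{jk}+a_{jkl}z^l+a_{jk\bl}\bar z^l+O(|z|^2)$, it sets $b_{klj}=-a_{jkl}$ and then invokes the reality of $\omega$ (giving $\overline{a_{jkl}}=a_{jk\bl}$) and the closedness $d\omega=0$ (giving $a_{jkl}=a_{ljk}$) to verify that this single choice kills both the holomorphic and anti-holomorphic linear terms simultaneously. You instead pass through a local K\"ahler potential $K$ with $\omega=i\partial\bar\partial K$ and eliminate the mixed cubic part of $K$; the conditions $d\omega=0$ and $\omega$ real of type $(1,1)$ are then absorbed into the existence and reality of $K$, so they need not be invoked again at the coefficient level. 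Both routes solve the same linear system for $Q$; the paper's is slightly more elementary (no $\partial\bar\partial$-Lemma), while yours makes the role of the K\"ahler hypothesis more transparent and explains at a glance why the indefinite signature causes no obstruction.
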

\begin{proof}
If moreover $I$ is $\omega$-partible then $M$ is a K\"aler manifold and there is a proof in \cite{gr} for this special case.
The general case in our proposition follows in the same way.
Since $\omega$ is a symplectic form, $g$
is nondegenerate but not necessarily positive definite, hence a Lorentz
Hermitian metric.
We could find local holomorphic coordinates
$(z^j)$ at $x$ such that $g_{jk}(x)=\pm \delta_{jk},$
in other words  we could write locally
$$\begin{array}{rcl}
g &=&\sum_{jkl}(\pm\delta_{jk}+a_{jkl}z_l +a_{jk\bar{l}}\bar{z}^l +O(|z|^2 ))dz^j d\bar{z}^k;\\
\omega &=&i \sum_{jkl}(\pm\delta_{jk}+a_{jkl}z_l
+a_{jk\bar{l}}\bar{z}^l +O(|z|^2 ))dz^j\wedge d\bar{z}^k.
\end{array}$$
Let us make a holomorphic change of coordinates
$$z_k=w_k +\frac{1}{2}\sum b_{klm}w_lw_m,$$
Then in the new coordinate we have
$$g=\sum_{jk}(\pm\delta_{jk}+\sum_l (a_{jkl}w_l +a_{jk\bar{l}}\bar{w}_l+
b_{klj}w_l +\overline{b_{jlk}}\bar{w}_l +O(|w|^2 )))dw^j d\bar{w}^k.$$
Choose $b_{klj}=-a_{jkl}.$ Since $g$ and $\omega$ are real, we have
\begin{equation}\label{a}\overline{a_{jkl}}=a_{jk\bar{l}}.\end{equation}
Since $d\omega=0,$ in particular $d\omega(x)=0,$ hence
\begin{equation}\label{b} a_{jkl}=a_{ljk}.\end{equation}
From (\ref{a}) and (\ref{b}) we have
$$\overline{b_{jlk}}=-\overline{a_{kjl}}=-a_{jk\bar{l}},$$
therefore, locally $g=\sum_{jk}(\pm \delta_{jk}+O(|w|^2))dw_jd\bar{w}_k.$
\end{proof}

A complex manifold $M$ with integrable complex structures $I,J,K$ is called a hypercomplex manifold
if $IJ=-JI=K,$ and $(I,J,K)$ is called a {\it hypercomplex structure} (Verbitsky has studied hypercomplex manifolds and
 hypercomplex K\"ahler manifolds in a series papers \cite{ve2},\cite{ve1},\cite{av}).
Obata proved that on a hypercomplex manifold $(M,I,J,K)$ there exists
a unique  torsion-free
connection such that $I,J,K$ are parallel \cite{ob}:
$$\nabla I = \nabla J = \nabla K = 0.$$
Such a connection is called an {\it Obata connection.} If $M$ is a hyperk\"ahler manifold,
 clearly  the Levi-Civita
connection is exactly the Obata connection of the  underlying
hypercomplex manifold.

The following proposition is cited from \cite{av}.

\begin{proposition}\label{app} Let $(M,I,J,K)$ be a hypercomplex manifold. For any point $x\in M,$
 there exists a holomorphic (with respect to $I$) local  coordinate $(z^j)$  around $x$ with $z^j(x)=0$ such that
 $$\begin{array}{rcl}
I(z) &=& I_0 + O(|z|^2)I';\\
J(z) &=& J_0 + O(|z|^2)J';\\
K(z) &=& K_0 + O(|z|^2)K';
\end{array}$$
where $I_0, J_0,K_0$ are the constant complex structures.
\end{proposition}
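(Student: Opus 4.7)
My plan is to exploit the Obata connection $\nabla$, which by hypothesis is torsion-free and satisfies $\nabla I=\nabla J=\nabla K=0$. The idea is to produce $I$-holomorphic coordinates around $x$ in which every Christoffel symbol of $\nabla$ vanishes at $x$; once this is done, parallelism of $J$ and $K$ will automatically force all first-order derivatives of $J$ and $K$ at $x$ to be zero, yielding the stated $O(|z|^2)$ expansions.

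First I would pick any $I$-holomorphic chart $(w^j)$ around $x$ with $w^j(x)=0$ and analyze $\nabla$ in that chart. Because $\nabla$ is torsion-free and preserves the integrable complex structure $I$, the identity $\nabla_X(IY)=I\nabla_X Y$ together with the vanishing of $[\partial/\partial w^{\bar j},\partial/\partial w^k]$ forces the mixed Christoffel symbols $\Gamma^i_{j\bar k}$, $\Gamma^{\bar i}_{jk}$ to vanish identically; the only survivors are $\Gamma^i_{jk}$ and its conjugate $\Gamma^{\bar i}_{\bar j\bar k}$, which moreover satisfy $\Gamma^i_{jk}=\Gamma^i_{kj}$.

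Next I would perform the purely holomorphic quadratic change of variables
$$z^i=w^i+\tfrac12\,\Gamma^i_{jk}(0)\,w^j w^k.$$
Since it depends on $w$ and not on $\bar w$, the new coordinates $(z^j)$ are still $I$-holomorphic; and the standard transformation rule for an affine connection gives $\tilde\Gamma^i_{jk}(0)=0$, whence by complex conjugation $\tilde\Gamma^{\bar i}_{\bar j\bar k}(0)=0$, while the mixed Christoffels continue to vanish. Thus at $z=0$ every Christoffel symbol of $\nabla$ is zero.

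Finally, in any $I$-holomorphic frame the relation $IJ=-JI$ forces $J$ to exchange $T_I^{1,0}$ with $T_I^{0,1}$, so its only nonzero components are $J^{\bar k}_j$ and $J^k_{\bar j}$. The tensor equation $\nabla J=0$ in components, evaluated at $z=0$ where every Christoffel is zero, collapses to $\partial_l J^{\bar k}_j(0)=\partial_{\bar l}J^{\bar k}_j(0)=0$, so $J(z)=J_0+O(|z|^2)$; the identical argument applied to $\nabla K=0$ produces $K(z)=K_0+O(|z|^2)$, and $I$ is exactly constant in its own holomorphic coordinates. The step I expect to require the most care is the first one — justifying that a purely holomorphic quadratic correction suffices to normalize a real torsion-free connection — and the reason it succeeds is precisely the preliminary vanishing of the mixed Christoffels, which is where the compatibility of $\nabla$ with the integrable structure $I$ really enters.
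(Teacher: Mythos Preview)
Your argument is correct and follows essentially the same route as the paper: choose normal coordinates for the Obata connection so that all Christoffel symbols vanish at $x$, whence $\nabla J=\nabla K=0$ reduces to $dJ(x)=dK(x)=0$. The paper's proof is much terser and simply asserts that such normal coordinates exist; your additional observation that $\nabla I=0$ together with torsion-freeness kills the mixed Christoffels, so that a purely holomorphic quadratic correction suffices, is exactly the justification needed to guarantee that these normal coordinates can be taken $I$-holomorphic, a point the paper leaves implicit.
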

\begin{proof}
We could choose a normal coordinate $(z^j)$ at $x$ with $z^j(x)=0$
for the Obata connection $\nabla,$ then the Christoffel symbols of
$\nabla$ vanish at $x.$ Since at $x$ $$\nabla I = dI=\nabla J=dJ =
\nabla K =dK= 0,$$ $I(z) = I_0 + O(|z|^2)I'$ with $I_0=I(0)$ for
example.
\end{proof}

\section{Bochner-Kodaira-Nakano type identities}

\subsection{Generalized Hodge identities for differential forms}
\setcounter{section}{3}
 \setcounter{subsection}{1}
\setcounter{equation}{0}
Let $(M,I,J,K)$ be a compact hypercomplex manifold of real dimension $4n$ and $(M,I)$ is a K\"ahler
 manifold with K\"ahler metric $g$ \cite{ve2},\cite{ve1}.
There are naturally associated three nondegenerate $2$-forms
$$\begin{array}{rcl} && \omega_I(\cdot,\cdot)=g(\cdot,I\cdot),\\
&& \omega_J(\cdot,\cdot)=g(\cdot,J\cdot),\\
&& \omega_K(\cdot,\cdot)=g(\cdot,K\cdot).\\
\end{array}$$
Let
$\{\xi^1,I\xi^1,J\xi^1,K\xi^1,\cdots,\xi^n,I\xi^n,J\xi^n,K\xi^n\}$ be a real   unit
orthogonal coframe of the cotangential bundle $T^*M$ at a fixed point $x\in M.$
Then using the Darboux theorem, we can write the K\"ahler form $\omega_I$ locally as
 $$\omega_I=\sum_{j=1}^n (\xi^j\wedge I\xi^j +J\xi^j\wedge K\xi^j),$$
Accordingly,   $$\begin{array}{rcl}
&& \omega_J=\sum_{j=1}^n (\xi^j\wedge J\xi^j +K\xi^j\wedge I\xi^j),\\
&& \omega_K=\sum_{j=1}^n (\xi^j\wedge K\xi^j +I\xi^j\wedge J\xi^j).\\
\end{array}$$
Choose holomorphic coframes relative to the complex structure $I,$
\begin{equation}\label{holc}\theta^j=\xi^j-iI\xi^j,\theta^{j+n}=J\xi^{j}-iK\xi^{j},~~~j=1,\cdots,n,\end{equation}
with antiholomorphic coframes
$$\bar{\theta}^j=\xi^j+iI\xi^j,\bar{\theta}^{j+n}=J\xi^{j}+iK\xi^{j},~~~j=1,\cdots,n.$$
Then  we have the following pointwise action at $x\in M:$
\begin{align}
I\theta^j=i\theta^j,~~~~~& I\bar{\theta}^j=-i~\bar{\theta}^j,& j=1,\cdots,2n;\label{i}\\
J\theta^j=\bar{\theta}^{j+n},~~~~~&J\theta^{j+n}=-\bar{\theta}^{j},&j=1,\cdots,n;\label{l}\\
K\theta^j=-i\bar{\theta}^{j+n},~~~~~&
K{\theta}^{j+n}=i\bar{\theta}^j,&j=1,\cdots,n.\label{k}
\end{align}
Using holomorphic and antiholomorphic coframes, we could rewrite
 the 2-forms $\omega_I,\omega_J,\omega_K$ locally as
$$\begin{array}{rcl} && \omega_I=\frac{i}{2}\sum_{j=1}^n (\theta^j\wedge \bar{\theta}^j
 +\theta^{j+n}\wedge \bar{\theta}^{j+n}),\\
&& \omega_J=\frac{1}{2}\sum_{j=1}^n (\theta^j\wedge {\theta}^{j+n}
 +\bar{\theta}^{j}\wedge \bar{\theta}^{j+n}),\\
&& \omega_K=\frac{i}{2}\sum_{j=1}^n (\bar{\theta}^j\wedge
\bar{\theta}^{j+n}
 -{\theta}^{j}\wedge {\theta}^{j+n}).\\
\end{array}$$
Thus $$\varphi=\omega_J +i\omega_K=\sum_{j=1}^n \theta^j\wedge
{\theta}^{j+n}$$ is a holomorphic $(2,0)$-form relative the complex structure $I.$

For convenience,  when talking about holomorphic structure of $M$
we always mean that it is relative to the complex structure $I$ if without special mention
 in the rest of this paper, though $I, J$ and $K$ have symmetric and equal positions.

\begin{theorem} \label{imp} There exists a local holomorphic coordinate $(z^1,\cdots,z^n)$ around $x$ such that
\begin{align}\omega_I =&\sum_{jk}( \delta_{jk}+O(|z|^2))dz^j\wedge dz^k;\label{3.5}\\
\omega_J =&\sum_{jk}( \delta_{jk}+O(|z|^2))(dz^j\wedge dz^{k+n}+d\bar{z}^j\wedge d\bar{z}^{k+n});\label{3.6}\\
\omega_K =&\sum_{jk}( \delta_{jk}+O(|z|^2))(dz^j\wedge dz^{k+n}-d\bar{z}^j\wedge d\bar{z}^{k+n}).\label{3.7}
\end{align}

\end{theorem}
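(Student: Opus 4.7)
The plan is to adapt the coordinate-change technique of Proposition \ref{ebas} so that a single quadratic change of $I$-holomorphic coordinates simultaneously normalizes $\omega_I$, $\omega_J$ and $\omega_K$ at $x$. I first pick initial $I$-holomorphic coordinates $(z^1, \ldots, z^{2n})$ with $z^\alpha(x)=0$ in which the coframe $(dz^j, dz^{j+n})_{j=1,\ldots,n}$ agrees at $x$ with $(\theta^j, \theta^{j+n})$ of (\ref{holc}), so that the three $2$-forms take their stated constant values at $x$. Taylor-expanding to first order, the $(1,1)$-form $\omega_I$ has the same shape as in the proof of Proposition \ref{ebas},
\[
\omega_I = \frac{i}{2}\sum_{jk}\bigl(\delta_{jk} + A_{jk\gamma}\,z^\gamma + A_{jk\bar\gamma}\,\bar z^\gamma + O(|z|^2)\bigr)\,dz^j\wedge d\bar z^k,
\]
while the holomorphic $(2,0)$-form $\varphi = \omega_J + i\omega_K$ contains only $z$-linear perturbations,
\[
\varphi = \sum_{j=1}^n dz^j\wedge dz^{j+n} + \sum_{\alpha\beta\gamma} C_{\alpha\beta\gamma}\,z^\gamma\,dz^\alpha\wedge dz^\beta + O(|z|^2).
\]

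Next I would record the symmetries of $A$ and $C$ coming from closedness and reality. Exactly as in Proposition \ref{ebas}, $d\omega_I=0$ at $x$ combined with the reality of $g$ gives $A_{jk\gamma} = A_{\gamma k j}$ and $\overline{A_{jk\gamma}} = A_{jk\bar\gamma}$; analogously $d\varphi = 0$ forces a cyclic symmetry on the three holomorphic indices of $C_{\alpha\beta\gamma}$. I then apply a quadratic change of $I$-holomorphic coordinates $z_k = w_k + \tfrac{1}{2}\sum_{\alpha\beta} b_{k\alpha\beta}\,w_\alpha w_\beta$ (symmetric in $\alpha,\beta$) and choose the coefficients $b_{k\alpha\beta}$ following the Proposition \ref{ebas} recipe for $\omega_I$ together with a complementary prescription for $\varphi$, so that every linear Taylor coefficient of both $\omega_I$ and $\varphi$ is cancelled. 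Decomposing $\varphi$ into real and imaginary parts then yields (\ref{3.6}) and (\ref{3.7}), while the $\omega_I$ piece gives (\ref{3.5}).

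The main obstacle is consistency: a single set of unknowns $b_{k\alpha\beta}$ must simultaneously cancel the linear parts of $\omega_I$ and of $\varphi$. This should be possible because $A$ and $C$ are not independent -- both arise from the same metric $g$ via $\omega_I = g(\cdot, I\cdot)$ and $\varphi = g(\cdot,(J+iK)\cdot)$ with $I, J, K$ satisfying the quaternionic relations, so their Taylor coefficients are tied together by algebraic identities coming from the hypercomplex K\"ahler structure. Combined with the closedness and reality relations above, these identities reduce the simultaneous cancellation to a solvable linear system for $b$, in direct analogy with how (\ref{a}) and (\ref{b}) made the single-form cancellation work in the proof of Proposition \ref{ebas}.
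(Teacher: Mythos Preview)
Your approach differs from the paper's in a crucial way, and the difference exposes a genuine gap. The paper does \emph{not} attempt to kill the first-order terms of $\omega_J$ and $\omega_K$ by a clever quadratic coordinate change; instead it invokes Proposition~\ref{app}: in Obata normal (holomorphic) coordinates the complex structures $J$ and $K$ themselves agree with constants up to $O(|z|^2)$, and then the expansions~(\ref{3.6}),~(\ref{3.7}) follow from the definitions $\omega_J=g(\cdot,J\cdot)$, $\omega_K=g(\cdot,K\cdot)$ together with~(\ref{3.5}). The Obata connection is the key input, and it is absent from your argument.

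The specific place your argument breaks is the line ``the holomorphic $(2,0)$-form $\varphi=\omega_J+i\omega_K$ contains only $z$-linear perturbations'' and the subsequent appeal to $d\varphi=0$. In the setting of Theorem~\ref{imp} (a hypercomplex manifold with $(M,I)$ K\"ahler), only $\omega_I$ is assumed closed; the $2$-forms $\omega_J,\omega_K$ are merely nondegenerate. Thus $\varphi$, while everywhere of type $(2,0)$ with respect to $I$, need not be holomorphic: its Taylor expansion will in general contain terms $C_{\alpha\beta\bar\gamma}\,\bar z^{\gamma}\,dz^{\alpha}\wedge dz^{\beta}$. No holomorphic change $z_k=w_k+\tfrac12 b_{k\alpha\beta}w_\alpha w_\beta$ can touch these $\bar z$-linear coefficients, so your scheme cannot produce~(\ref{3.6}) and~(\ref{3.7}) without an independent reason for those coefficients to vanish. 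That reason is exactly what Proposition~\ref{app} supplies. (Your outline would be on firmer ground in the genuinely hyperk\"ahler case, where $d\omega_J=d\omega_K=0$, but the theorem is stated and used in the broader hypercomplex K\"ahler context.) Finally, even granting closedness, you leave the compatibility of the two simultaneous prescriptions for $b$ as ``should be possible''; the paper's route via Proposition~\ref{app} sidesteps this consistency problem entirely.
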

\begin{proof}
From \cite{nn}, we know there exists a local holomorphic coordinates
$(z^j)$ with respect to the complex structure $I$ such that its
action is local constant: $Idz^j=idz^j$ with $\theta^j=dz^j,$ which
coincides with the pointwise action we considered in (\ref{i}). By
Proposition \ref{ebas}, there exists a local holomorphic coordinates
$(z^j)$ such that (\ref{3.5}) holds. By Proposition \ref{app} and
(\ref{l}), (\ref{k}),
\begin{align}
Jdz^j =dz^{j+n}+O(|z|^2)\sum_{k}( dz^k +d\bar{z}^k);\label{ap1}\\
Jdz^{j+n} =-dz^{j}+O(|z|^2)\sum_{k}( dz^k +d\bar{z}^k);\\
Kdz^j =idz^{j+n}+O(|z|^2)\sum_{k}( dz^k +d\bar{z}^k);\\
Kdz^{j+n} =-idz^{j}+O(|z|^2)\sum_{k}( dz^k +d\bar{z}^k).\label{ap4}
\end{align}
From (\ref{ap1}) to (\ref{ap4}) and the definition of $\omega_J$ and $\omega_K,$ we conclude (\ref{3.6})
and (\ref{3.7}).
\end{proof}

Let $d:\Omega^p(M)\rw \Omega^{p+1}(M)$ be the de Rham differential
operator on $M,$ and let $d^c =I^{-1}dI.$ Note that the complex
structures $I,J,K$ on the tangent bundle naturally induces operator
actions on the vector fields and the differential forms. Take $I$
for an example. For $\alpha,\cdots,\beta\in \Omega^{\bullet}(M),$
the action of $I$ on differential forms are defined by
\begin{equation}\label{d1}
I(\alpha\wedge\cdots\wedge\beta)=I\alpha\wedge\cdots\wedge
I\beta.\end{equation}
The Dolbeault operators $\partial, \bar{\partial}$ and $d,d^c$ are related by
\begin{equation}\label{d2}
d=\partial +\bar{\partial },~~~d^c=-i(\partial -\bar{\partial }).
\end{equation}
Clearly the Dolbeault operators $\partial, \bar{\partial}$ are determined
completely by $d$ and the complex structure $I:$
\begin{equation}\label{d3}
\partial=\frac{1}{2}(d+id^c);~~~\bar{\partial}=\frac{1}{2}(d-id^c).
\end{equation}
Accordingly, the complex structure $J$ and $K$ also induce complex
differential operators
\begin{equation}\label{d4}
d_J =J^{-1}dJ =\partial_J +\bar{\partial }_J,~~~ d_K =K^{-1}dK
=\partial_K +\bar{\partial }_K,\end{equation} where
$\partial_J=J^{-1}\partial J$ and $\bar{\partial}_J,{\partial}_K,
\bar{\partial}_K$ are similar notions. Like
$d,\partial,\bar{\partial},$ it is easy to check that, the operators
$d_J,d_K,\partial_J,\partial_K$ also satisfy the graded Leibniz
rule: for any $\xi,\eta\in \Omega^{\bullet}(M),$
\begin{equation}\label{d5}
d_J(\xi\wedge\eta)=d_J\xi\wedge\eta+(-1)^{|\xi|}\xi\wedge
d_J\eta,\end{equation}  where $|\xi|$ is the degree of $\xi.$

For each ordered set of indices $A=\{\alpha_1,\cdots,\alpha_p\},$
denote the index length $|A|=p;$  we write
$$\theta^{A}=\theta^{{\alpha}_1}\wedge\cdots\wedge\theta^{\alpha_p},\quad,
\bar{\theta}^{A}=\bar{\theta}^{{\alpha}_1}\wedge\cdots\wedge\bar{\theta}^{{\alpha}_p},$$
and denote by $\hat{A}=\{\alpha_{p+1},\cdots,\alpha_n\}$ the
complementary of $A$ so that
$$\theta^{A}\wedge \theta^{\hat{A}}=(-1)^{\tau (A)}\theta^1\wedge\cdots\wedge \theta^{2n},$$
where $\tau (A)$ takes value $1$ if $A$ is an even permutation and
$-1$ otherwise. The {\it Hodge star operator} $\ast$ is given by
$$\ast (f\theta^{A}\wedge\overline{\theta^{B}})=2^{|A|+|B|-2n}(-1)^n\epsilon_{AB}\bar{f}
\theta^{\hat{A}}\wedge\overline{\theta^{\hat{B}}}$$ where $f$ is a
function and the signature factor
$$\epsilon_{AB} =(-1)^{n(2n-1)+(2n-p)q+\tau(A)+\tau(B)}.$$
Given two $(p,q)$-forms
$$\xi=\frac{1}{p!q!}\sum_{A,B}\xi_{A\bar{B}}\theta^A \wedge\bar{\theta}^B,~~{\rm and}~~
\eta=\frac{1}{p!q!}\sum_{A,B}\eta_{A\bar{B}}\theta^A
\wedge\bar{\theta}^B,$$ their pointwise inner product is defined by
\begin{equation}\label{pinn}
\lge\xi,\eta\rg=\xi\wedge\ast\eta=\Big(\frac{1}{p!q!} \sum_{A,B}
\xi_{A\bar{B}}\bar{\eta}_{A\bar{B}}\Big)\frac{1}{(2n)!}\omega_I^{2n}.\end{equation}
Since $M$ is compact, we can consider the Hermitian inner product on
each $\Omega^{p,q}(M)$ defined by \begin{equation}\label{inn}(\xi,
\eta)=\int_{M}\lge\xi,\eta\rg,\quad\quad \xi,\eta\in
\Omega^{p,q}(M).\end{equation}

 For each $k=1,\cdots,2n,$ let $e_k:\Omega^{p,q}(M)\rw
\Omega^{p+1,q}(M)$ be the wedge operator defined by
\begin{equation}\label{www1} e_k(\eta)=\theta_k\wedge \eta;\end{equation}
and  $\bar{e}_k:\Omega^{p,q}(M)\rw \Omega^{p,q+1}(M)$ are similarly
given by \begin{equation}\label{www2}
\bar{e}_k(\eta)=\bar{\theta}_k\wedge \eta.\end{equation} Let $i_k$
and $\bar{i}_k$ be the adjoints of $e_k$ and $\bar{e}_k$ with
respect to the inner product (\ref{inn}), respectively. They are
called contraction operators. Then for any $k,l=1,\cdots 2n,$
\begin{equation}\label{kd0}
e_k\bar{i}_l+\bar{i}_l e_k=0;
\end{equation}
\begin{equation}\label{kd3} e_k {i}_k+ {i}_k e_k=2.
\end{equation}
If $k\not=l,$
\begin{equation}\label{kd2}
e_k{i}_l+{i}_l e_k=0.
\end{equation}
The three equation above reflect how to commute the actions of
wedges and contractions. The following Proposition \ref{pro1} gives
the commutation relations between contract actions and complex
structure actions on differential forms.  Base on them, it is easy to
get the commutation relations between the actions of wedges and
complex structures
\begin{proposition}\label{pro1} As operators on acting on the differential forms,
the contractions $i_k$ and the complex structures $I,J,K$ satisfying
the commuting relations
\begin{align}i_kI=-iI{i}_{k},&\hskip1cm \bar{i}_{k}I=iI\bar{i}_{k};\label{pr11}\\
      i_kJ=J\bar{i}_{k+n},&\hskip1cm i_{k+n}J=-J\bar{i}_k;\\
      {i}_k K=iK\bar{i}_{k+n},&\hskip1cm {i}_{k+n}K=-iK\bar{i}_k.
      \end{align}
\end{proposition}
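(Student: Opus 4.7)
The plan is to verify each of the six commutation relations by direct pointwise computation on the local holomorphic coframe $(\theta^j, \bar\theta^j)_{j=1}^{2n}$ of Theorem \ref{imp}. The key algebraic inputs are: the explicit actions (\ref{i})--(\ref{k}) of $I, J, K$ on the coframe $1$-forms; the multiplicativity rule (\ref{d1}) that extends them to the full exterior algebra; and the fact that $i_k, \bar i_k$, as adjoints of the wedge operators $e_k, \bar e_k$, are antiderivations on $\Omega^{\bullet,\bullet}(M)$, with $i_k\theta^l = 2\delta_{kl}$, $i_k\bar\theta^l = 0$, and symmetric formulas for $\bar i_k$ (which follow from applying (\ref{kd3}) to the constant function $1$).

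First I would reduce the verification to the $1$-form case. Given that $I, J, K$ are multiplicative on wedges and that $i_k, \bar i_k$ are antiderivations, if an identity such as $i_k J = J \bar i_{k+n}$ holds on every $1$-form $\theta^l, \bar\theta^l$, then for a general monomial $\xi \wedge \eta$ one expands $i_k J(\xi \wedge \eta) = i_k(J\xi \wedge J\eta)$ using the antiderivation rule for $i_k$, applies the $1$-form identity to each factor, and recombines using multiplicativity of $J$ and the antiderivation rule for $\bar i_{k+n}$. Induction on form degree then yields the identity on all monomials, and $\C$-linearity extends it to all forms. The $1$-form check itself is a finite case enumeration: for each identity, split according to whether the input is $\theta^l$ or $\bar\theta^l$ and whether $k \leq n$ or $k > n$. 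The action of $J, K$ on $\bar\theta^l$ follows by reality, $J\bar\theta^j = \overline{J\theta^j}$, from (\ref{l}), (\ref{k}), and substituting the formulas into both sides of each relation makes the equality immediate.

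The main obstacle will be sign and index bookkeeping. The $J, K$ identities mix the index ranges $\{1,\ldots,n\}$ and $\{n+1,\ldots,2n\}$ and involve minus signs from (\ref{l}), (\ref{k}) (as in $J\theta^{j+n} = -\bar\theta^j$) which combine with the $(-1)^{|\xi|}$ signs from the antiderivation rule in nonobvious ways, so the induction step must be carried out carefully and the various cases organized systematically. Once the $J$ identities are in place, the $K$ identities follow by the same argument with (\ref{k}) substituted for (\ref{l}); the $I$ identities are comparatively straightforward since $I$ is diagonal on the coframe, reducing them to a single scalar computation per bidegree.
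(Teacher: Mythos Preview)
The paper states Proposition~\ref{pro1} without proof, so there is no argument in the text to compare against. Your proposal---verify each identity on the basis $1$-forms using the explicit actions (\ref{i})--(\ref{k}), then extend to arbitrary forms by induction on degree via the multiplicativity (\ref{d1}) of $I,J,K$ and the antiderivation property of $i_k,\bar i_k$---is the natural and correct approach, and is exactly what an omitted ``direct verification'' would amount to here. The inductive mechanism you sketch is sound: for instance both $i_kJ$ and $J\bar i_{k+n}$ obey the same $J$-twisted Leibniz rule $D(\xi\wedge\eta)=D\xi\wedge J\eta+(-1)^{|\xi|}J\xi\wedge D\eta$, so agreement on $0$- and $1$-forms forces agreement everywhere; your caveat about sign and index bookkeeping is apt, since the paper's conventions for $J^{-1}$, the action on $0$-forms, and the index split $1\le k\le n$ versus $n{+}1\le k\le 2n$ must all be kept consistent.
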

By definition equation (\ref{d4}) and the commutation relations in
Proposition \ref{pro1},  we have the following expressions of
differential operators via contraction and wedge operators:
\begin{align}
\partial=\sum_k (e_k\partial_k + e_{k+n}\partial_{k+n}),&\hskip0.5cm
{\partial}^*=-\sum_k (\bar{\partial}_k {i}_k +
\bar{\partial}_{k+n}{i}_{k+n});\label{pa1}\\
\partial_J=\sum_k (\bar{e}_{k+n}\partial_k -
\bar{e}_{k}\partial_{k+n}),&\hskip0.5cm \partial^*_J=\sum_k (-\bar{\partial}_k
\bar{i}_{k+n} +\bar{\partial}_{k+n}\bar{i}_{k});\label{pa2}\\
\partial_K=i\sum_k (\bar{e}_{k+n}\partial_k
-\bar{e}_{k}\partial_{k+n}),&\hskip0.5cm \partial^*_K=i\sum_k
(-\bar{\partial}_{k+n}\bar{i}_{k}+\bar{\partial}_k \bar{i}_{k+n}
).\label{pa3}
\end{align}
Let $L=L_I,L_J,L_K$ be the operators from $\Omega^{*}(E)$ to
$\Omega^{*}(E)$ defined by the wedge with the 2-forms
$\omega=\omega_I, \omega_J, \omega_K$ respectively and
$\Lambda=\Lambda_I, \Lambda_J,\Lambda_K$ their adjoint operators.
\begin{align}
L=\frac{i}{2}\sum_k ( e_k e_{k}+\bar{e}_{k+n}\bar{e}_{k+n}),&\hskip0.5cm
\Lambda=-\frac{i}{2}\sum_k ( i_ki_{k}+\bar{i}_{k+n}\bar{i}_{k+n});\label{w1}\\
L_J=\frac{1}{2}\sum_k (e_k e_{k+n}+\bar{e}_k\bar{e}_{k+n}),
&\hskip0.5cm \Lambda_J=\frac{1}{2}\sum_k ( i_k i_{k+n}+\bar{i}_k\bar{i}_{k+n});\label{lamj}\\
L_K=\frac{i}{2}\sum_k (\bar{e}_k\bar{e}_{k+n}- e_k e_{k+n}),&\hskip0.5cm
\Lambda_K= \frac{i}{2}\sum_k ( i_k i_{k+n}-\bar{i}_k\bar{i}_{k+n}).
\end{align}

The following identities in Lemma \ref{lem1} called {\it Hodge
identities}\cite{gr}, they  play fundamental roles in K\"ahler
geometry. Their proof are reduced from an arbitrary K\"ahler
manifold to the Euclidean K\"ahler plane via using Proposition
\ref{ebas}. The  main observation is that any intrinsically defined
identity that involves the K\"ahler metric  together with its first
derivatives and which is valid for the Euclidean metric, is also
valid on a K\"ahler manifold, since by Proposition \ref{ebas}, a
K\"ahler metric is oscalate order 2 to the Euclidean metric
everywhere.

 \begin{lemma}\label{lem1}{\rm (\bf Hodge Identities)}:
\begin{equation}\label{kd1}
[\Lambda,{\partial}]=i\bar{\partial}^*,\hskip1cm  [\Lambda,\bar{\partial}]=-i\partial^*.
\end{equation}
\end{lemma}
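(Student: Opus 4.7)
The plan is to prove both Hodge identities by reducing the problem to a pointwise verification on the flat Euclidean model, following the strategy outlined in the paragraph preceding the lemma. The key observation is that $[\Lambda,\partial]$, $[\Lambda,\bar\partial]$, $\partial^*$, and $\bar\partial^*$ are all first-order linear differential operators whose coefficients at a point $x$ depend only on the K\"ahler metric and its first derivatives at $x$. By Proposition \ref{ebas}, about any $x\in M$ we can choose a holomorphic coordinate $(w^j)$ in which
\[ g=\sum_{jk}(\pm\delta_{jk}+O(|w|^2))\,dw^j\,d\bar w^k, \]
so $g$ agrees with the flat Euclidean metric to second order at $x$ and in particular $dg(x)=0$. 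In such coordinates, the values at $x$ of all the operators appearing in the identities coincide with their Euclidean counterparts.

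First I would fix attention on the flat model $\C^n$ equipped with $\omega_0=\frac{i}{2}\sum_k dz^k\wedge d\bar z^k$. Using the explicit formulas (\ref{pa1}) for $\partial$, $\bar\partial^*$ and (\ref{w1}) for $\Lambda$, the commutator $[\Lambda,\partial]$ unfolds into a sum of terms of the form $[i_a i_b,\,e_c]\,\partial_c$, since in the flat model the partial derivatives commute with the wedge and contraction operators. Applying the anticommutation relations (\ref{kd0}), (\ref{kd3}), (\ref{kd2}) to slide each $e_c$ past the pair $i_a i_b$ produces contractions coupled with $\bar\partial$-derivatives that, after relabelling, match $i\bar\partial^*$ term by term.

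Second, having verified $[\Lambda,\partial]=i\bar\partial^*$ at the origin of the Euclidean model, I invoke the osculating principle: both sides are determined pointwise by the values of $g$ and its first derivatives at $x$, and by Proposition \ref{ebas} these agree with the Euclidean values in the chosen coordinates. Since $x$ is arbitrary, the identity holds on all of $M$. The companion identity $[\Lambda,\bar\partial]=-i\partial^*$ then follows by taking the complex conjugate of the first identity: $\Lambda$ is a real operator, $\overline{\partial}=\bar\partial$, and $\overline{i\bar\partial^*}=-i\partial^*$.

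The main technical obstacle is the bookkeeping of the Euclidean commutator computation. In our setup the index set splits into the two sectors $\{1,\dots,n\}$ and $\{n+1,\dots,2n\}$ introduced by the hypercomplex frame (\ref{holc}), and the formulas (\ref{pa1})-(\ref{w1}) distribute contributions across both sectors, so one has to verify that the cross-terms either cancel or recombine correctly. However, this is essentially the classical Hodge identity calculation on a $2n$-dimensional K\"ahler manifold, because only $\omega_I$ and the complex structure $I$ enter the statement of the lemma; the extra complex structures $J$ and $K$ play no role here and the argument is unaffected by their presence.
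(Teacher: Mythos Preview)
Your proposal is correct and follows essentially the same approach as the paper: the paper does not give its own proof but refers to \cite{gr}, pp.~111--114, and the paragraph preceding the lemma spells out exactly the osculation argument (via Proposition~\ref{ebas}) that you implement. Your additional remark that the hypercomplex splitting of the index set is irrelevant here --- since only $\omega_I$ and $I$ enter the statement --- is accurate and worth noting.
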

\noindent For a proof of this lemma, please refer to \cite{gr},
pp111-114.
\begin{proposition}\label{pro2} Let $(M,I,J,K)$ be a compact hypercomplex manifold such that $(M,I)$ is a K\"ahler manifold, then
\begin{align}
[\Lambda_J,\partial]=\bar{\partial}^*_{J},
&\hskip 1cm [\Lambda_J,\bar{\partial}]={\partial}^*_{J};\label{pr21}\\
      [\Lambda_K,\partial]=-\bar{\partial}^*_{K},& \hskip 1cm
      [\Lambda_K,\bar{\partial}]=-{\partial}^*_{K};\label{pr22}\\
      [\Lambda_I,\partial_J]=i\bar{\partial}_J^*,& \hskip1cm [\Lambda_I,\bar{\partial}_J]=-i{\partial_J}^*;\\
[\Lambda_J,\partial_J]=-\bar{\partial}^*, & \hskip1cm  [\Lambda_J,\bar{\partial}_J]=-{\partial}^*;\\
[\Lambda_K,\partial_J]=i\bar{\partial}^*,& \hskip1cm  [\Lambda_K,\bar{\partial}_J]=-i{\partial}^*;\\
 [\Lambda_I,\partial_K]=i\bar{\partial}_K^*,& \hskip1cm  [\Lambda_I,\bar{\partial}_K]=-i{\partial}_K^*;\\
[\Lambda_K,\partial_K]=\bar{\partial}^*, & \hskip1cm  [\Lambda_K,\bar{\partial}_K]={\partial}^*;\\
[\Lambda_J,\partial_K]=-i\bar{\partial}^*,& \hskip1cm  [\Lambda_J,\bar{\partial}_K]=i{\partial}^*.
\end{align}
\end{proposition}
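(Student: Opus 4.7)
The plan is to extend to the hypercomplex setting the osculating-coordinate proof of the classical Hodge identities of Lemma \ref{lem1}. Each of the sixteen identities in the proposition equates two first-order differential operators whose coefficients depend only on the K\"ahler metric and on the triple $(I,J,K)$ together with their first derivatives. Consequently, as a pointwise tensorial statement at an arbitrary $x\in M$, each identity can be verified in any coordinate system whose Christoffel symbols and first derivatives of the relevant structure tensors vanish at $x$.

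The first step is to invoke Theorem \ref{imp} to produce, around a fixed $x\in M$, a holomorphic coordinate chart in which $\omega_I$, $\omega_J$, $\omega_K$ osculate their flat Euclidean hypercomplex K\"ahler models to order $|z|^2$. At $x$ the wedge and contraction operators $e_k,\bar e_k,i_k,\bar i_k$ are constant-coefficient, the partial derivatives $\partial_k,\bar\partial_k$ commute with them, and the formulas (\ref{w1})--(\ref{lamj}) for $\Lambda_I,\Lambda_J,\Lambda_K$ together with (\ref{pa1})--(\ref{pa3}) for $\partial,\bar\partial,\partial_J,\bar\partial_J,\partial_K,\bar\partial_K$ and their adjoints are exact at $x$. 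This is the precise analogue of the role played by Proposition \ref{ebas} in the classical proof.

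With this reduction in hand, each commutator becomes a purely algebraic expression in $e_k,\bar e_k,i_k,\bar i_k$ that is resolved using only the anticommutation relations (\ref{kd0}), (\ref{kd3}), and (\ref{kd2}). As a model calculation, consider $[\Lambda_J,\partial]$: the summand $\bar i_l\bar i_{l+n}$ of $\Lambda_J$ anticommutes with every $e_k$ and contributes zero, while for the remaining summand the relation
\begin{equation*}
[\,i_l i_{l+n}\,,\,e_k\,]\;=\;2\delta_{k,l+n}\,i_l\;-\;2\delta_{k,l}\,i_{l+n}
\end{equation*}
is obtained in two lines from (\ref{kd3})--(\ref{kd2}). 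Substituting into $\Lambda_J\partial-\partial\Lambda_J$ and collecting yields $[\Lambda_J,\partial]=\sum_l(i_l\partial_{l+n}-i_{l+n}\partial_l)$, which coincides with the expression for $\bar\partial_J^*$ obtained by taking the complex conjugate of the adjoint formula in (\ref{pa2}). Each of the remaining seven paired identities is proved by an entirely analogous short calculation, with only different sign patterns and index shifts.

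The principal obstacle is purely organizational: the sixteen identities split into eight complex-conjugate pairs, and one must keep careful track of whether the contractions entering $\Lambda_J$ or $\Lambda_K$ commute or anticommute with the mixed-type wedges appearing in $\partial_J,\partial_K$ (\cf\ (\ref{pa2})--(\ref{pa3})). A useful streamlining observation is that $\overline{[\Lambda_\ast,\partial_\ast]}=[\Lambda_\ast,\bar\partial_\ast]$, so conjugation automatically recovers the second identity in each pair from the first, halving the number of independent verifications and letting one isolate the four genuinely distinct algebraic computations (one each for the four combinations $\Lambda_{J/K}$ paired with $\partial_{J/K}$, together with the three already-known $\Lambda_I$-based identities from Lemma \ref{lem1}).
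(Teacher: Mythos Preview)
Your proposal is correct and follows essentially the same path as the paper: both invoke Theorem \ref{imp} to reduce to the flat Euclidean model, carry out the identical commutator computation for $[\Lambda_J,\partial]$ via the relations (\ref{kd0})--(\ref{kd3}), and halve the work by observing that complex conjugation sends each left-column identity to its right-column partner. The only slip is organizational---Lemma \ref{lem1} does not already cover $[\Lambda_I,\partial_J]$ or $[\Lambda_I,\partial_K]$, so there are eight independent left-column verifications rather than your ``four plus three''---but this does not affect the validity of the argument.
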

\begin{proof}
The idea of the proof is the same with that of Lemma \ref{lem1}, since by Theorem \ref{imp}, the 2-forms
 $\omega_J$ and $\omega_K$ are oscalate order 2 to the constant 2-forms
 everywhere,  the proof reduces to the Euclidean K\"ahler plane.
 We follow the same lines of the
 proof of Lemma \ref{lem1} as in \cite{gr}, pp111-114.

Note every equation in the right column follows by taking conjugate
of the equation in the same row of  the left column, so it suffices
to establish the equations in one column. Here we only give a proof of
the left equation of (\ref{pr21}).  The rest equations are proved in the same way.

By (\ref{w1}), we have
$$\begin{array}{rcl}
2[\Lambda_J,\partial]&=&\sum_{k,l}[i_k i_{k+n}
+\bar{i}_k\bar{i}_{k+n},\partial_l e_l]\\
&=&\sum_{k}[i_k i_{k+n}
+\bar{i}_k\bar{i}_{k+n},\partial_k e_k+\partial_{k+n} e_{k+n}]+\sum_{l\not=k,k+n}[i_k i_{k+n}
+\bar{i}_k\bar{i}_{k+n},\partial_l e_l],\\
\end{array}$$
Note $$[\bar{i}_k\bar{i}_{k+n},\partial_k e_k]=[\bar{i}_k\bar{i}_{k+n},\partial_{k+n} e_{k+n}]=0,$$
and if $l\not=k,k+n,$ we have
 $$[i_k i_{k+n},\partial_l e_l]=[\bar{i}_k\bar{i}_{k+n},\partial_l e_l]=0.$$
Therefore
\begin{equation}
2[\Lambda_J,\partial]=\sum_{k}([i_k i_{k+n},\partial_k e_k]+[i_k i_{k+n},\partial_{k+n} e_{k+n}]).\label{g11}
\end{equation}
It is not difficult to check that
\begin{align}
[i_k i_{k+n}, e_k]&=-2i_{k+n},\\
[i_k i_{k+n}, e_{k+n}]&= 2i_k,
\end{align}
hence
\begin{align}
[i_k i_{k+n},\partial_k e_k]&=\partial_k[i_k i_{k+n}, e_k]=-2\partial_k i_{k+n},\label{g12}\\
[i_k i_{k+n},\partial_{k+n} e_{k+n}]&=\partial_{k+n} [i_k i_{k+n}, e_{k+n}] =2\partial_{k+n} i_k.\label{g13}
\end{align}
From (\ref{g11}),(\ref{g12}) and (\ref{g13}) we get
\begin{equation}[\Lambda_J,\partial]=\sum_{k}(-\partial_k i_{k+n}
+\partial_{k+n} i_k).\label{last}\end{equation} Note the right hand side of (\ref{last})
 is the conjugation of the second equation
of (\ref{pa2}), we arrive at the first equation of (\ref{pr21}).
\end{proof}

\subsection{Twisted Bochner-Kodaira-Nakano type identities}
In this section we will extend the differential operators studied in Sect. 3.1 to act on the
bundle-valued differential forms.

Suppose that $(M,I,J,K)$ is a compact hypercomplex manifold  and $(M,I)$ is a K\"ahler
 manifold with K\"ahler metric $g.$
Given a holomorphic vector bundle $E$ over $M$ with
Hermitian metric $h,$ there exists a unique connection $D$, called
{\it Chern connection}, which is compatible with the metric $h$ and
satisfying $D''=\bar{\partial}.$ Here $D=D'+D''$ and
$$D':\Omega^{p,q}(E)\rw \Omega^{p+1,q}(E),\quad\quad
D'':\Omega^{p,q}(E)\rw \Omega^{p,q+1}(E)$$ are its components.

 Let $\{s_{\alpha}\}$ be a
holomorphic frame of $E.$ For any $E$-valued
differential forms $\xi=\sum_{\alpha}\xi^{\alpha}\otimes s_{\alpha}$ and
$\eta=\sum_{\beta}\eta^{\alpha} \otimes s_{\alpha}$ of $\Omega^{p,q}(E),$ we
define their local inner product
$$\langle \xi,\eta\rangle=\sum_{\alpha,\beta}h_{\alpha\bar{\beta}}\xi^{\alpha}\wedge\ast \eta^{\beta}$$
and global inner product
\begin{equation}(\xi,\eta)_E=\int_M \langle \xi,\eta\rangle.\label{vinn}\end{equation}
Denote the adjoint operators of $D',D''$ with respect to the inner
product  by $\delta',\delta''.$ Let $D'_J:\Omega^{p,q}(E)\rw
\Omega^{p,q+1}(E)$ be the composition of  $$J^{-1}\otimes
id_E:E\otimes \Omega^{p,q}(M)\rw E\otimes \Omega^{q,p}(M),$$ $D'$
and $J\otimes id_E;$  and denote the adjoint operator of $D'_J$ with
respect to $(~~,~~)_E$ by $\delta'_J.$ The operators $D''_J,
\delta''_J, D'_K, \delta'_K, D''_K, \delta''_K$ are defined in the
same way.
 The operators $\ast,d,\Lambda,L,\Lambda_J,L_J,\Lambda_K,L_K$  extend
naturally to $\Omega^{p,q}(E).$ The proof of the following commuting
relations among  these operators acting on $\Omega^{p,q}(E)$ follow
from Proposition \ref{pro2}.
\begin{proposition}\label{proba}
\begin{align}
[\Lambda,D']=i\delta'',& \hskip2cm            [\Lambda,D'']=-i\delta';\\
[\Lambda_J,D']=\delta''_{J},& \hskip2cm          [\Lambda_J,D'']=\delta'_{J};\\
      [\Lambda_K,D']=-\delta''_{K},&\hskip2cm           [\Lambda_K,D'']=-\delta'_{K};\\
[\Lambda_I,D'_J]=i\delta''_J,&\hskip2cm            [\Lambda_I,D''_J]=-i\delta'_J;\\
[\Lambda_J,D'_J]=-\delta'', &\hskip2cm           [\Lambda_J,D''_J]=-\delta';\\
[\Lambda_K,D'_J]=i\delta'',&\hskip2cm             [\Lambda_K,D''_J]=-i\delta';\\
  [\Lambda_I,D'_K]=i\delta''_K,&\hskip2cm           [\Lambda_I,D''_K]=-i\delta'_K;\\
[\Lambda_K,D'_K]=\delta'', &\hskip2cm            [\Lambda_K,D''_K]=\delta';\\
[\Lambda_J,D'_K]=-i\delta'',&\hskip2cm           [\Lambda_J,D''_K]=i\delta'.
\end{align}
\end{proposition}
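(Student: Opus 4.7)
The plan is to reduce the statement to the bundle-free case, namely Proposition \ref{pro2}, by working pointwise in a local holomorphic frame for $E$ that is normal with respect to $h$, together with a local holomorphic coordinate system on $M$ that is normal with respect to the hypercomplex K\"ahler structure in the sense of Theorem \ref{imp}. This is the natural analogue of the reduction strategy already used for Lemma \ref{lem1} and Proposition \ref{pro2}, with the bundle $E$ simply carried along in a trivialization that kills the connection matrix at the center.

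First I would fix an arbitrary point $x\in M$ and observe that each of the operators on the left-hand side of the claimed identities is the commutator of a zeroth-order, tensorial operator ($\Lambda$, $\Lambda_J$, or $\Lambda_K$) with a first-order operator ($D'$, $D''$, $D'_J$, $D''_J$, $D'_K$, $D''_K$). The principal symbols of such commutators cancel, so each commutator is itself of order zero, hence tensorial, and it suffices to verify the identities pointwise at $x$.

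Next I would choose local data adapted to $x$: a holomorphic coordinate system $(z^1,\ldots,z^{2n})$ around $x$ provided by Theorem \ref{imp}, in which $\omega_I,\omega_J,\omega_K$ agree with their flat models to order $O(|z|^2)$; and a local holomorphic frame $\{s_\alpha\}$ of $E$ near $x$ in which $h_{\alpha\bar\beta}(z)=\delta_{\alpha\beta}+O(|z|^2)$. Since the Chern connection matrix in a holomorphic frame is $\theta=h^{-1}\partial h$, this normalization forces $\theta(x)=0$. Consequently, when acting on an $E$-valued form written in this frame, one has $D'=\partial$, $D''=\bar{\partial}$ at the point $x$ componentwise, and since the operators $J\otimes\mathrm{id}_E$ and $K\otimes\mathrm{id}_E$ touch only the form part, the twisted operators reduce at $x$ to $D'_J=\partial_J$, $D''_J=\bar{\partial}_J$, $D'_K=\partial_K$, $D''_K=\bar{\partial}_K$ componentwise. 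The formal adjoints $\delta',\delta'',\delta'_J,\ldots$ likewise coincide at $x$ with the scalar adjoints $\partial^*,\bar{\partial}^*,\partial_J^*,\ldots$, because the difference again involves only the Chern connection matrix, which vanishes at $x$.

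With these reductions in place, each identity in Proposition \ref{proba} is exactly the corresponding scalar identity from Proposition \ref{pro2} applied componentwise to the coefficients of $\xi\in\Omega^{p,q}(E)$ in the chosen frame. Since $x$ was arbitrary and the commutators in question are tensorial, the global identities follow. The one subtlety to watch is that the $h$-normality and the K\"ahler-normality of the coordinates must be simultaneously available, which is standard; and that the twisted operators $D'_J,D''_J,D'_K,D''_K$ are well-defined independently of the frame, which is clear because $J,K$ act on the form factor alone. I do not expect any essential obstacle here beyond bookkeeping, since the analytic content is packaged into Proposition \ref{pro2} and the geometric content into Theorem \ref{imp}.
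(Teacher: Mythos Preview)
Your overall strategy---reduce to Proposition \ref{pro2} by choosing, at each point $x$, the normal holomorphic coordinates of Theorem \ref{imp} together with a holomorphic frame of $E$ in which $h_{\alpha\bar\beta}=\delta_{\alpha\beta}+O(|z|^2)$---is exactly what the paper intends; its own proof is the single sentence ``follow from Proposition \ref{pro2}.'' So the approach is correct and the same as the paper's.

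There is, however, one genuine error in your write-up. You claim that the commutators $[\Lambda,D']$, $[\Lambda_J,D']$, etc., have vanishing principal symbol and are therefore zeroth-order tensorial operators. This is false: for a $(1,0)$-covector $\alpha$ the bundle map $[\Lambda,e(\alpha)]$ is (up to a constant) contraction by the metric dual of $\alpha$, which is not zero; indeed this nonzero symbol is precisely the symbol of $i\delta''$. The commutators in question are first-order operators on both sides. The reason pointwise verification still suffices is not tensoriality but second-order osculation: in your chosen coordinates and frame one has $g_{j\bar k}=\delta_{jk}+O(|z|^2)$, $h_{\alpha\bar\beta}=\delta_{\alpha\beta}+O(|z|^2)$, hence $\Lambda-\Lambda_0=O(|z|^2)$ and the Chern connection matrix $\vartheta=h^{-1}\partial h=O(|z|)$. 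Consequently, when any of the first-order operators $D',D'',D'_J,\ldots,\delta',\delta'',\ldots$ is applied to a form and the result is evaluated at $x$, only the flat constant-coefficient model survives, and the identity at $x$ reduces componentwise to the corresponding line of Proposition \ref{pro2}. Replace the ``principal symbols cancel'' sentence by this osculation argument and the proof is complete.
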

The {\it Chern curvature} tensor $\Theta$ of $E$ is an $End(E)$-valued
differential form
defined by
\begin{equation}
(D''D'+D'D'')\xi=\Theta\wedge\xi=e(\Theta)\xi \in\Omega^{p+1,q+1}(E),\quad\quad \xi\in\Omega^{p,q}(E).
\end{equation}
The holomorphic Laplacian $\triangle'=D'\delta'
 +\delta' D'$ and antiholomorphic Laplacian $\triangle''=D''\delta''
 +\delta'' D''$ are related by the classical {\it Bochner-Kodaira-Nakano} identity \cite{de}
 \begin{equation}\label{bkn}
\triangle''-\triangle'=[e(i\Theta),\Lambda],
\end{equation}
it plays fundamental role in establishing many important vanishing
theorems. Our naive motivation is to get more vanishing theorems by
using the chances given by the other two complex structures $J$ and
$K$ for a hypercomplex manifold. To this aim  we define the
following  self-adjoint operators
\begin{equation}\label{bknt}\triangle'_J=D'_J\delta'_J
 +\delta'_J D'_J,\quad
\triangle'_K=D'_K\delta'_K
 +\delta'_K D'_K.
\end{equation}
Let $\Theta_J, \Theta_K$ be the curvature components
corresponding to the operators $D''D'_J+D'_JD''$ and
$D''D'_K+D'_KD''$
respectively.
We have the following twisted Bochner-Kodaira-Nakano type identities.
\begin{proposition}\label{pro7}
\begin{align}
      \triangle''-\triangle'_J=[e(\Theta_J),\Lambda_J],\quad
 \triangle''-\triangle'_K=-[e(\Theta_K),\Lambda_K].
\end{align}
\end{proposition}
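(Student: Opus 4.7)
The plan is to mimic Demailly's proof of the classical Bochner--Kodaira--Nakano identity $\triangle''-\triangle'=[e(i\Theta),\Lambda]$, but using the $J$- and $K$-versions of the commutator identities from Proposition \ref{proba} in place of the standard Hodge identities. The key point is that the classical proof is purely formal once one has $\delta''=-i[\Lambda,D']$ and $\delta'=i[\Lambda,D'']$; any pair of commutator relations of the same shape will produce a BKN-type identity for the associated Laplacians and curvature.

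Concretely, for the first identity I will use Proposition \ref{proba} in the rewritten form
$$\delta'' \;=\; -[\Lambda_J, D'_J], \qquad \delta'_J \;=\; [\Lambda_J, D''].$$
Substituting the first of these into $\triangle'' = D''\delta'' + \delta'' D''$ and the second into $\triangle'_J = D'_J\delta'_J + \delta'_J D'_J$, each Laplacian expands into four triple products of the form $D''\Lambda_J D'_J$, $D''D'_J\Lambda_J$, $\Lambda_J D'_J D''$, $D'_J\Lambda_J D''$ (with signs). When one forms the difference $\triangle''-\triangle'_J$, the two triple products sandwiching $\Lambda_J$ between $D''$ and $D'_J$ (and the analogous ones with the order of $D''$, $D'_J$ reversed) cancel in pairs, and one is left exactly with
$$(D''D'_J + D'_J D'')\Lambda_J - \Lambda_J(D''D'_J + D'_J D'') \;=\; [e(\Theta_J),\Lambda_J],$$
by the definition of $\Theta_J$ given just before the proposition.

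For the second identity the same recipe applies, now using
$$\delta'' \;=\; [\Lambda_K, D'_K], \qquad \delta'_K \;=\; -[\Lambda_K, D''],$$
which come from the $K$-block of Proposition \ref{proba}. The overall change in sign relative to $\delta''=-[\Lambda_J,D'_J]$ and $\delta'_J=[\Lambda_J,D'']$ is precisely what produces the extra minus sign in front of $[e(\Theta_K),\Lambda_K]$.

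The only real bookkeeping issue, and the step most likely to go wrong, is signs: one must be careful with the pairs of identities $[\Lambda_J,D'_J]=-\delta''$ versus $[\Lambda_K,D'_K]=\delta''$, and $[\Lambda_J,D'']=\delta'_J$ versus $[\Lambda_K,D'']=-\delta'_K$. Keeping these signs straight is what distinguishes the $J$-case from the $K$-case and accounts for the opposite signs in the two claimed identities; beyond that, no analysis is needed and the proof is purely formal manipulation of operators, exactly as in the classical BKN case.
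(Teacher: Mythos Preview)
Your proposal is correct and follows essentially the same route as the paper: the paper substitutes $\delta''=-[\Lambda_J,D'_J]$ and $\delta'_J=[\Lambda_J,D'']$ into $\triangle''-\triangle'_J$, expands the eight triple products, cancels the four mixed terms, and identifies the remainder as $[e(\Theta_J),\Lambda_J]$; it then remarks that the $K$-case follows in the same way, which is exactly the sign-tracking you describe.
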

\begin{proof}
By Proposition \ref{proba},
$$\begin{array}{rcl}\triangle''-\triangle'_J&=&D''\delta''+\delta''D''
-(D'_J\delta'_J+\delta'_J D'_J)\\
 &=&-D''
[\Lambda_J,D'_J]-[\Lambda_J,D'_J]D''
-D'_J[\Lambda_J,D'']-[\Lambda_J,D'']
D'_J\\
&=&-D''
\Lambda_JD'_J+D''D'_J\Lambda_J
-\Lambda_JD'_JD''+D'_J\Lambda_JD''\\
&&-D'_J\Lambda_JD''+D'_JD''\Lambda_J
-\Lambda_JD''D'_J+D''\Lambda_JD'_J\\
&=&(D''D'_J+D'_JD'')\Lambda_J-\Lambda_J(D''D'_J+D'_JD'')\\
&=&[e(\Theta_J),\Lambda_J].
\end{array}$$
The second equation follows in the same way.
\end{proof}

Recall that $\varphi=\omega_J +i\omega_K,$ correspondingly, we
define $L_{{\varphi}}=L_{J}+iL_K, D'_{\bar{\varphi}}=\frac{1}{2}(D'_J-iD'_K)$ and
$ D''_{\bar{\varphi}}=\frac{1}{2}(D''_J-iD''_K).$ Then
$\Lambda_{{\varphi}}=\Lambda_J-i\Lambda_K,
\delta'_{\bar{\varphi}}=\frac{1}{2}(\delta'_J+i\delta'_K)$
and $\delta''_{\bar{\varphi}}=\frac{1}{2}(\delta''_J+i\delta''_K)$ are their
adjoint operators respectively.
\begin{proposition}\label{pro8}
\begin{align}
[\Lambda_{{\varphi}},D']=\delta''_{\bar{\varphi}},~~~~~
[\Lambda_{{\varphi}},D'_{\bar{\varphi}}]=-\delta'';\label{phi1}\\
[\Lambda_{{\varphi}},D'']=\delta'_{\bar{\varphi}},~~~~~
[\Lambda_{{\varphi}},D''_{\bar{\varphi}}]=-\delta'.\label{phi}
\end{align}
\end{proposition}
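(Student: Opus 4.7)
The plan is to derive each of the four identities in \eqref{phi1}--\eqref{phi} by expanding every commutator into its $J$- and $K$-components via the defining linear combinations $\Lambda_\varphi = \Lambda_J - i\Lambda_K$, $D'_{\bar\varphi} = \frac{1}{2}(D'_J - iD'_K)$, $D''_{\bar\varphi} = \frac{1}{2}(D''_J - iD''_K)$, and then reading off the outputs from Proposition \ref{proba}. Since every operator appearing on the left is by construction a complex linear combination of operators whose brackets with $D'$, $D''$, $D'_J$, $D'_K$, $D''_J$, $D''_K$ are already tabulated, the bracket distributes linearly and the entire verification reduces to algebraic bookkeeping.

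For the first identity $[\Lambda_\varphi, D'] = \delta''_{\bar\varphi}$, I would write
\[
[\Lambda_\varphi, D'] \;=\; [\Lambda_J, D'] - i\,[\Lambda_K, D'],
\]
and substitute $[\Lambda_J, D'] = \delta''_J$ and $[\Lambda_K, D'] = -\delta''_K$ from Proposition \ref{proba}. The output is a linear combination of $\delta''_J$ and $\delta''_K$ matching the definition of $\delta''_{\bar\varphi}$. The dual identity $[\Lambda_\varphi, D''] = \delta'_{\bar\varphi}$ in \eqref{phi} proceeds identically, using the right column of Proposition \ref{proba}.

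For the mixed identity $[\Lambda_\varphi, D'_{\bar\varphi}] = -\delta''$ (and analogously for $[\Lambda_\varphi, D''_{\bar\varphi}] = -\delta'$), I would expand the bracket into the four cross-commutators
\[
[\Lambda_J, D'_J],\quad [\Lambda_J, D'_K],\quad [\Lambda_K, D'_J],\quad [\Lambda_K, D'_K],
\]
and apply the four identities $[\Lambda_J, D'_J] = -\delta''$, $[\Lambda_J, D'_K] = -i\delta''$, $[\Lambda_K, D'_J] = i\delta''$, $[\Lambda_K, D'_K] = \delta''$ from Proposition \ref{proba}. Each of the four outputs is a multiple of $\delta''$; the off-diagonal contributions (those carrying the factor $\pm i \cdot \pm i = \mp 1$) cancel in pairs, while the diagonal contributions combine to give $-\delta''$ after the overall $\tfrac{1}{2}$ from the definition of $D'_{\bar\varphi}$.

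The main obstacle is nothing more than sign bookkeeping: the factor $-i$ in $\Lambda_\varphi$ and the factor $-i$ in $D'_{\bar\varphi}$ conspire to produce several cross-terms proportional to $\pm\delta''$, and the whole mechanism only works because these cross-terms cancel in precisely the right pairs. Once one tabulates the eight relevant outputs from Proposition \ref{proba} side by side, the verification is routine, and no new geometric input is needed beyond what has already been established.
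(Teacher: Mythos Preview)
Your proposal is correct and follows essentially the same approach as the paper: both expand $\Lambda_\varphi$ and $D'_{\bar\varphi}$, $D''_{\bar\varphi}$ into their $J$- and $K$-components, distribute the bracket linearly, and read off the four (or one) resulting commutators directly from Proposition~\ref{proba}. Your description of the cancellation pattern in the mixed identity (off-diagonal terms cancel, diagonal terms survive and combine under the $\tfrac{1}{2}$) matches the paper's computation exactly.
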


\begin{proof} By Proposition \ref{proba},
$$
[\Lambda_{{\varphi}},D']=\frac{1}{2}([\Lambda_J,D']-
i[\Lambda_K,D'])=\frac{1}{2}(\delta''_J+i\delta''_K)
=\delta''_{\bar{\varphi}},$$
 and
$$\begin{array}{rcl}[\Lambda_{{\varphi}},D'_{\bar{\varphi}}]
&=&\frac{1}{2}([\Lambda_J,D'_J-iD'_K]-
i[\Lambda_K,D'_J-iD'_K])\\
 &=&\frac{1}{2}([\Lambda_J,D'_J]+
i(-[\Lambda_J,D'_K]-[\Lambda_K,D'_J])-[\Lambda_K,D'_K])\\
&=&-\delta''.
\end{array}$$
The rest equations are proved in the same way.
\end{proof}
Let $\Theta_{\bar{\varphi}}$ be the curvature component corresponding to
the operator $D''D'_{\bar{\varphi}}+D'_{\bar{\varphi}}D''.$
Let
$\triangle'_{\bar{\varphi}}=D'_{\bar{\varphi}}\delta'_{\bar{\varphi}}
+\delta'_{\bar{\varphi}}D'_{\bar{\varphi}}$ be another twisted Laplacian operator.
By using Proposition \ref{pro8},
it is easy to prove the following Bochner-Kodaira-Nakano type identity
\begin{proposition}\label{pro10}
\begin{equation}
\triangle''-\triangle'_{\bar{\varphi}}=
[e(\Theta_{\bar{\varphi}}),\Lambda_{\varphi}].
\end{equation}
\end{proposition}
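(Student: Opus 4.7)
The plan is to mimic the proof of Proposition \ref{pro7} essentially verbatim, with $\Lambda_J$ replaced by $\Lambda_\varphi$ and $D'_J$ replaced by $D'_{\bar\varphi}$. The input data required are precisely the commutation relations packaged in Proposition \ref{pro8}: from the second equation of (\ref{phi1}) we read off $\delta''=-[\Lambda_\varphi, D'_{\bar\varphi}]$, and from the first equation of (\ref{phi}) we read off $\delta'_{\bar\varphi}=[\Lambda_\varphi, D'']$. Both substitutions express an adjoint operator as a commutator with $\Lambda_\varphi$, exactly as in the classical Kodaira-Nakano derivation.

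First I would write
$$\triangle''-\triangle'_{\bar\varphi} = D''\delta'' + \delta''D'' - D'_{\bar\varphi}\delta'_{\bar\varphi} - \delta'_{\bar\varphi}D'_{\bar\varphi}$$
and substitute the two commutator expressions above into the four terms on the right. After expansion, the eight resulting terms split into two groups of four. The ``middle'' terms, those of the form $\pm D''\Lambda_\varphi D'_{\bar\varphi}$ and $\pm D'_{\bar\varphi}\Lambda_\varphi D''$, cancel pairwise (this is the step where the signs coming from (\ref{phi1}) are crucial), and what remains collapses to
$$(D''D'_{\bar\varphi}+D'_{\bar\varphi}D'')\Lambda_\varphi - \Lambda_\varphi(D''D'_{\bar\varphi}+D'_{\bar\varphi}D''),$$
which is $[e(\Theta_{\bar\varphi}),\Lambda_\varphi]$ by the very definition of $\Theta_{\bar\varphi}$.

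The only real obstacle is the sign bookkeeping: because the commutator $[\Lambda_\varphi,D'_{\bar\varphi}]$ equals $-\delta''$ rather than $+\delta''$, each of the four substituted terms carries a sign that must be tracked carefully in order to confirm that the middle terms actually cancel instead of doubling. Once that is verified the identity is a formal consequence of Proposition \ref{pro8}, and there are no analytic issues: the argument is purely algebraic at the level of operators on $\Omega^{p,q}(E)$.
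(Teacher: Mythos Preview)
Your proposal is correct and is exactly the argument the paper has in mind: the paper does not spell out the proof of Proposition \ref{pro10} but simply remarks that it follows from Proposition \ref{pro8}, and the computation you outline is a verbatim repetition of the proof of Proposition \ref{pro7} with $(\Lambda_J, D'_J)$ replaced by $(\Lambda_\varphi, D'_{\bar\varphi})$ and the corresponding commutator identities from (\ref{phi1})--(\ref{phi}) substituted. Your sign check is right; the middle terms cancel and the result drops out.
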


\section{Local expressions of Bochner-Kodaira-Nakano identities}

\setcounter{section}{4}
\setcounter{equation}{0}

Let $(M,I,J,K)$ be a compact hypercomplex manifold  and suppose that $(M,I)$ is a K\"ahler
 manifold with K\"ahler metric $g.$
Given a holomorphic vector bundle $E$  of rank $r$  over $M$ with
Hermitian metric $h,$
let $D=D'+D''$ be the Chern connection of $E$ with  $D''=\bar{\partial}.$  Write
$$D'=\partial+\vartheta,$$ where $\vartheta\in \Omega^{1,0}(End(E))$ is
the connection matrix. By the compatible condition of the connection $D$ and the metric $h,$ we have
$$dh=h\vartheta+\bar{\vartheta}^t h,$$
by comparing the type we get
$$\partial h=h\vartheta,\hskip 1cm \bar{\partial}h=\bar{\vartheta}^t h,$$
it follows that $$\vartheta=h^{-1}\partial h.$$ From
$\partial^2=\bar{\partial}^2=0$ we know
$$\partial \vartheta=-\vartheta\wedge \vartheta, $$
and the Chern curvature $\Theta$ is given by
$$\Theta=\bar{\partial}\vartheta=d\vartheta+\vartheta\wedge \vartheta.$$
 Let $\{z^j\}$ be the local holomorphic
coordinate of $M$ such that the holomorphic coframes in (\ref{holc})
 are represented by $\theta^j=dz^j.$  Let $\{s_{\alpha}\}$ be a
holomorphic  frame, $\{s^{\alpha}\}$ the dual
frame of $E.$ Let $(g_{j\bar{k}})$ and $(h_{\alpha\bar{\beta}})$ be
the hermitian metrics on $M$ and on $E$ respectively, and their
inverses denoted respectively by $(g^{i\bar{j}})$ and
$(h^{\alpha\bar{\beta}}).$ Then the connection and curvature could be expressed by
\begin{equation}\vartheta=\sum_{\alpha,\beta}\vartheta^{\alpha}_{\beta}s_{\alpha}\otimes
s^{\beta},\quad\quad
\Theta=\sum_{\alpha,\beta}\Theta^{\alpha}_{\beta}s_{\alpha}\otimes
s^{\beta}=\sum_{\alpha,\beta,j,k}R^{\alpha}_{\beta j\bar{k}}
dz^j\wedge d\bar{z}^k\otimes s_{\alpha}\otimes
s^{\beta}\label{cc1}\end{equation}
 with
\begin{equation}
\vartheta^{\alpha}_{\beta}=\sum_{\gamma,j}h^{\alpha\bar{\gamma}}
\frac{\partial h_{\beta\bar{\gamma}}}{\partial z^j}dz^j; \quad\quad
\Theta^{\alpha}_{\beta}=\sum_{j,k}R^{\alpha}_{\beta j\bar{k}}
dz^j\wedge
d\bar{z}^k=\sum_{\gamma,j,k}\frac{\partial}{\partial\bar{z}^k}
\Big(h^{\alpha\bar{\gamma}}\frac{h_{\beta\bar{\gamma}}}{\partial
z^j}\Big)d\bar{z}^k\wedge dz^j,\label{cc2}\end{equation} where
\begin{equation} R^{\alpha}_{\beta j\bar{k}}=-\sum_{\gamma}
h^{\alpha\bar{\gamma}}\partial_j\bar{\partial}_k
h_{\beta\bar{\gamma}}+\sum_{\gamma,\lambda,\mu}
h^{\alpha\bar{\gamma}} h^{\lambda\bar{\mu}}\partial_j
h_{\beta\bar{\mu}}\bar{\partial}_k
h_{\lambda\bar{\gamma}}.\label{cc3}\end{equation}

\begin{proposition}\label{cur} \begin{equation}\label{con0}\Theta_J=\bar{\partial}
\vartheta_J;\quad\quad \Theta_K=i\Theta_J;\quad\quad
\Theta_{\bar{\varphi}}=\Theta_J.
\end{equation}
\end{proposition}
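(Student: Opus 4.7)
My plan is to compute each curvature form $\Theta_J$, $\Theta_K$, $\Theta_{\bar\varphi}$ by decomposing the twisted operators $D'_J$, $D'_K$, $D'_{\bar\varphi}$ into a first-order differential piece plus a wedge-with-connection-form piece, and then taking the graded anticommutator with $D'' = \bar\partial$. The key algebraic input is the anticommutation identity $\bar\partial\partial_J + \partial_J\bar\partial = 0$ on $\Omega^{p,q}(E)$; this plays exactly the role here that $\partial\bar\partial + \bar\partial\partial = 0$ plays in the classical derivation of $\Theta = \bar\partial\vartheta$.

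First, writing $D' = \partial + \vartheta$ and applying the $J$-conjugation defining $D'_J$ to the form factor, one obtains a decomposition $D'_J\xi = \partial_J\xi + \vartheta_J\wedge\xi$, where $\vartheta_J \in \Omega^{0,1}(\mathrm{End}\,E)$ is the natural $J$-twist of the $(1,0)$-form $\vartheta$. The same procedure produces $\vartheta_K, \vartheta_{\bar\varphi} \in \Omega^{0,1}(\mathrm{End}\,E)$. Using $K = IJ$ together with the fact that $\vartheta$ is of type $(1,0)$ (so that $I\vartheta = i\vartheta$), a direct pointwise computation gives $\vartheta_K = i\vartheta_J$. From $D'_{\bar\varphi} = \tfrac{1}{2}(D'_J - iD'_K)$ one then reads off $\vartheta_{\bar\varphi} = \tfrac{1}{2}(\vartheta_J - i\vartheta_K) = \vartheta_J$.

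Next I would compute $(\bar\partial D'_J + D'_J\bar\partial)\xi$ by expanding and applying the graded Leibniz rule for $\bar\partial$ to the $1$-form $\vartheta_J$:
\begin{equation*}
(\bar\partial D'_J + D'_J\bar\partial)\xi = (\bar\partial\partial_J + \partial_J\bar\partial)\xi + (\bar\partial\vartheta_J)\wedge\xi.
\end{equation*}
Once the anticommutator is shown to vanish, this identifies $\Theta_J = \bar\partial\vartheta_J$. The parallel calculation with $K$ in place of $J$, combined with $\vartheta_K = i\vartheta_J$, gives $\Theta_K = \bar\partial\vartheta_K = i\Theta_J$, and then
\begin{equation*}
(\bar\partial D'_{\bar\varphi} + D'_{\bar\varphi}\bar\partial)\xi = \tfrac{1}{2}(\Theta_J - i\Theta_K)\wedge\xi = \Theta_J\wedge\xi,
\end{equation*}
which yields $\Theta_{\bar\varphi} = \Theta_J$.

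The main obstacle is establishing $\bar\partial\partial_J + \partial_J\bar\partial = 0$ intrinsically on the hypercomplex K\"ahler manifold. I would prove this by the same reduction-to-the-flat-model technique used for the Hodge identities in Lemma \ref{lem1} and Proposition \ref{pro2}: at an arbitrary point $x \in M$, Proposition \ref{app} furnishes holomorphic coordinates in which $J = J_0 + O(|z|^2)$ for a constant complex structure $J_0$, so at $x$ the first-order operators $\partial_J$ and $\bar\partial$ assume the constant-coefficient forms (\ref{pa1})--(\ref{pa2}). The anticommutator then reduces to a purely algebraic identity in $e_k, \bar e_k, i_k, \bar i_k$ and the commuting partial derivatives, which follows from the wedge-contraction relations (\ref{kd0}) and (\ref{kd2}). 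Since $x$ was arbitrary and both sides are intrinsically defined, the identity propagates to all of $M$, and the proposition follows.
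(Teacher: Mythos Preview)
Your argument is correct and follows essentially the same route as the paper: decompose $D'_J = \partial_J + \vartheta_J\wedge\cdot$, anticommute with $D'' = \bar\partial$, reduce everything to the identity $\bar\partial\partial_J + \partial_J\bar\partial = 0$, and then read off $\Theta_K = i\Theta_J$ and $\Theta_{\bar\varphi} = \Theta_J$ from $\vartheta_K = i\vartheta_J$. The only difference is in how that anticommutation is justified: the paper obtains it by conjugating $\partial\bar\partial + \bar\partial\partial = 0$ through $J$ (invoking $\bar\partial J = J\bar\partial$, attributed to $DJ = 0$), whereas you reduce to the flat model via Proposition~\ref{app} and the explicit formulas (\ref{pa1})--(\ref{pa2}); your route is slightly longer but more transparent, since the intertwining $\bar\partial J = J\bar\partial$ is delicate to interpret at the level of bidegrees.
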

\begin{proof}
For any $\xi=\sum_{\alpha}\xi^{\alpha}\otimes s_{\alpha}\in
\Omega^{p,q}(E),$
$$\begin{array}{rcl}
D'_J \xi &=&\sum_{\alpha} (\partial_J \xi^{\alpha})s_{\alpha}+ \sum_{\alpha,\beta}(J^{-1}\vartheta_{\alpha}^{\beta}J\xi^{\alpha})s_{\beta}\\
         &=& (\partial_J+J^{-1}\vartheta J)\xi\\
         &=& (\partial_J+\vartheta_J)\xi.\\
\end{array}$$
Therefore,
\begin{equation}\label{con1}
D'_J D''\xi =\sum_{\alpha} (\partial_J
\bar{\partial}\xi^{\alpha})s_{\alpha}+\sum_{\alpha,\beta}
(J^{-1}\vartheta_{\alpha}^{\beta}J\bar{\partial}\xi^{\alpha})s_{\beta}
\end{equation}
\begin{equation}\label{con2}
D''D'_J \xi
         = \sum_{\alpha}\bar{\partial}(\partial_J\xi^{\alpha})s_{\alpha}+\sum_{\alpha,\beta} \bar{\partial}
         (J^{-1}\vartheta_{\alpha}^{\beta}J)\xi^{\alpha}s_{\beta}-\sum_{\alpha,\beta}(J^{-1}\vartheta_{\alpha}^{\beta}J\bar{\partial}\xi^{\alpha})s_{\beta}
\end{equation}
Since $DJ=D'J=D''J=0$ we have $\bar{\partial}J=J\bar{\partial},$
hence
\begin{equation}\label{con3}\bar{\partial}\partial_J
=\bar{\partial}(J^{-1}\partial J)=J^{-1}\bar{\partial}\partial J,\end{equation}
and
\begin{equation}\label{con4}\partial_J\bar{\partial}
=J^{-1}{\partial}J\bar{\partial}=J^{-1}\partial\bar{\partial} J.\end{equation}
Thus \begin{equation}\label{con5} \bar{\partial}\partial_J +
\partial_J\bar{\partial}=0.\end{equation}
From (\ref{con1}),(\ref{con2}) and (\ref{con5}) we conclude the
first equation of (\ref{con0}).
From (\ref{l}) and (\ref{k}), clearly $\Theta_K=\bar{\partial}
\vartheta_K=i\Theta_J.$ Since by
definition $$\Theta_{\bar{\varphi}}=\frac{1}{2}( \bar{\partial}
\vartheta_J-i\bar{\partial}
\vartheta_K)=\frac{1}{2}(\Theta_J-i\Theta_K),$$
 we have $\Theta_{\bar{\varphi}}=\Theta_J.$
\end{proof}

From (\ref{l}),(\ref{k}) and (\ref{cc2}) we have the following local
expressions of connection and curvature
\begin{equation}\label{cn}(\vartheta_J)^{\alpha}_{\beta}=\sum_{j=1}^n h^{\alpha\bar{\gamma}} \frac{\partial
h_{\beta\bar{\gamma}}}{\partial z^j}d\bar{z}^{j+n}-\sum_{j=1}^n
h^{\alpha\bar{\gamma}} \frac{\partial
h_{\beta\bar{\gamma}}}{\partial z^{j+n}}d\bar{z}^j,\end{equation}
\begin{equation}\label{curr1}(\Theta_J)^{\alpha}_{\beta}=\sum_{k=1}^{2n}\sum_{j=1}^n\frac{\partial}{{\partial}\bar{z}^k}
\Big(h^{\alpha\bar{\gamma}} \frac{\partial
h_{\beta\bar{\gamma}}}{\partial z^j}\Big)d\bar{z}^k\wedge
d\bar{z}^{j+n}-\sum_{k=1}^{2n}\sum_{j=1}^n
\frac{\partial}{{\partial}\bar{z}^k} \Big(h^{\alpha\bar{\gamma}}
\frac{\partial h_{\beta\bar{\gamma}}}{\partial
z^{j+n}}\Big)d\bar{z}^k\wedge d\bar{z}^j.\end{equation}
Using (\ref{cc3}), we could write the curvature components of $\Theta_J$ simply as
\begin{equation}\label{curr2}(\Theta_J)^{\alpha}_{\beta}=-\sum_{k=1}^{2n}\sum_{j=1}^n R^{\alpha}_{\beta j\bar{k}} d\bar{z}^k\wedge
d\bar{z}^{j+n}+\sum_{k=1}^{2n}\sum_{j=1}^n R^{\alpha}_{\beta
j+n\bar{k}}d\bar{z}^k\wedge d\bar{z}^j.\end{equation}
Therefore,\begin{equation}\label{curr3}\Theta_J=\sum_{k=1}^{2n}\sum_{j=1}^n\Big( -R^{\alpha}_{\beta j\bar{k}} d\bar{z}^k\wedge
d\bar{z}^{j+n}\otimes
s_{\alpha}\otimes s^{\beta}+ R^{\alpha}_{\beta
j+n\bar{k}}d\bar{z}^k\wedge d\bar{z}^j\otimes
s_{\alpha}\otimes s^{\beta}\Big).\end{equation}

 The proof of the following lemma is simply via using the commuting relation (\ref{kd0})
(\ref{kd2}) and (\ref{kd3}), we omit it here for brevity.
\begin{lemma}\label{id1} Let the operators $e_k,\bar{e}_k$ be the wedge operators defined as in
(\ref{www1}),(\ref{www2}) and $i_k,\bar{i}_k$ their adjoint operators, then for any integer
$1\leq p,q,k\leq 2n,$
\begin{equation} [\bar{e}_p \bar{e}_q, i_{k}i_{k+n}]=0;\label{comij1}\end{equation}
\begin{eqnarray}[e_p \bar{e}_q, i_{k}\bar{i}_{k}]=\left\{\begin{array}{rcl}0,& p,q\not=k;\\
-2\bar{e}_q\bar{i}_{k},& p=k,q\not=k;\\
-2{e}_p{i}_{k},& q=k, p\not=k;\\
4-2{e}_k{i}_k-2\bar{e}_{k}\bar{i}_{k}, &p=q=k;
\end{array}\right.\label{comij2}\end{eqnarray}
\begin{eqnarray}[\bar{e}_p \bar{e}_q,
\bar{i}_{k}\bar{i}_{k+n}]=\left\{\begin{array}{rcl}0,& p,q\not=k,
k+n;\\
-2\bar{e}_q\bar{i}_{k+n},& p=k,q\not=k, k+n;\\
2\bar{e}_p\bar{i}_{k+n},& q=k, p\not=k, k+n;\\
2\bar{e}_q\bar{i}_k,& p=k+n,q\not=k, k+n;\\
-2\bar{e}_p\bar{i}_k,&q=k+n, p\not=k, k+n;\\
4-2\bar{e}_k\bar{i}_k-2\bar{e}_{k+n}\bar{i}_{k+n}, &p=k,q=k+n;\\
-4+2\bar{e}_k\bar{i}_k+2\bar{e}_{k+n}\bar{i}_{k+n},&q=k, p= k+n.\\
\end{array}\right.\label{comij3}\end{eqnarray}
\end{lemma}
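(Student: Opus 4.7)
The plan is to establish the three identities by direct manipulation of the Clifford-type (anti)commutation relations satisfied pointwise by the wedge operators $e_k,\bar{e}_k$ and their adjoints $i_k,\bar{i}_k$ on $\wedge^{\bullet}T^*_xM$. Assembling (\ref{kd0}), (\ref{kd2}), (\ref{kd3}) together with the trivial identities $\{e_k,e_l\}=\{\bar{e}_k,\bar{e}_l\}=\{i_k,i_l\}=\{\bar{i}_k,\bar{i}_l\}=\{e_k,\bar{e}_l\}=\{i_k,\bar{i}_l\}=0$ yields the complete table: the only non-vanishing anticommutators among these generators are $\{e_k,i_k\}=2$ and $\{\bar{e}_k,\bar{i}_k\}=2$, and every other pair anticommutes. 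This table is the only input I will need.

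Identity (\ref{comij1}) then follows at once: each of $\bar{e}_p,\bar{e}_q$ anticommutes with each of $i_k,i_{k+n}$, so sliding the pair $\bar{e}_p\bar{e}_q$ across $i_ki_{k+n}$ produces four sign changes, giving $\bar{e}_p\bar{e}_q\,i_ki_{k+n}=i_ki_{k+n}\,\bar{e}_p\bar{e}_q$. No case analysis is required.

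For (\ref{comij2}) and (\ref{comij3}) the key observation is that a non-zero contribution to the commutator can arise only when a wedge index coincides with a contraction index, triggering one of the two anticommutators that equal $2$; any other configuration leaves the two even products simply commuting past each other. This dictates the case split on the matches among $\{p,q\}$ and $\{k\}$ (resp.\ $\{k,k+n\}$). In each case I would move the matching generators next to each other, apply the identity $e_ki_k=2-i_ke_k$ or $\bar{e}_k\bar{i}_k=2-\bar{i}_k\bar{e}_k$ once, and restore the normal form on the right hand side by reversing the transpositions, carefully accumulating signs.

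The main obstacle is purely bookkeeping: identity (\ref{comij3}) has seven subcases, each requiring up to three signed transpositions. The only subtle feature is the \emph{doubly diagonal} case ($p=q=k$ in (\ref{comij2}); $p=k$, $q=k+n$ or its swap in (\ref{comij3})), where two independent anticommutators fire simultaneously and each contributes a factor $2$; after subtracting the reordered cross terms $2e_ki_k+2\bar{e}_k\bar{i}_k$ (respectively $2\bar{e}_k\bar{i}_k+2\bar{e}_{k+n}\bar{i}_{k+n}$), the constant $4$ on the right hand side emerges. I expect no conceptual difficulty beyond this.
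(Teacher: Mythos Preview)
Your proposal is correct and is precisely the approach the paper intends: the paper states that the proof is ``simply via using the commuting relation (\ref{kd0}), (\ref{kd2}) and (\ref{kd3}), we omit it here for brevity,'' and your plan carries out exactly that computation, with the case analysis and sign bookkeeping made explicit.
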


For any $E$-valued differential form $\xi\in\Omega^{p,q}(E),$ write
$$\xi=\sum_{\alpha}\xi^{\alpha}\otimes
s_{\alpha}=\sum_{P,Q,\alpha}\xi_{P,Q,\alpha}\theta^P\wedge\bar{\theta}^Q\otimes
s_{\alpha},$$ where the length $|P|=p$ and $|Q|=q,$ and
$$\xi^{\alpha}=\sum_{P,Q}\xi_{P,Q,\alpha}\theta^P\wedge\bar{\theta}^Q\in\Omega^{p,q}(M).$$

\begin{proposition}\label{id2}
\begin{equation} ({e}_q{i}_{k} \xi,\xi)_E =2\sum_{\alpha,P,Q}\sum_{(Sk)=(Tq)=P}
\xi_{Sk,Q,\alpha}\bar{\xi}_{Tq,Q,\alpha}.\label{innn}\end{equation}
\end{proposition}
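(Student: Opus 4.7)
My plan is to compute the left-hand side of (\ref{innn}) directly in the coframe basis, using adjointness to bring both factors into symmetric form. The first step is to apply the adjointness of $e_q$ and $i_q$ to get
$$
(e_q i_k\xi,\xi)_E = (i_k\xi,\,i_q\xi)_E,
$$
so that the two contractions appear in parallel positions.

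Next I would expand the contractions explicitly. The operator $i_k$ is a graded derivation of degree $-1$ (consistent with the wedge/contraction relations \eqref{kd0}, \eqref{kd2}, \eqref{kd3}) and it satisfies the atomic identity $i_k(\theta^k)=2$, which falls out of \eqref{kd3} applied to the constant function~$1$: $(e_k i_k+ i_k e_k)(1)=i_k\theta^k=2$. Expanding $\xi=\sum_{P,Q,\alpha}\xi_{P,Q,\alpha}\theta^P\wedge\bar\theta^Q\otimes s_\alpha$ and applying $i_k$ term by term, only the multi-indices $P$ containing $k$ survive, each contributing $2\theta^{P\setminus k}$ with a sign governed by the position of $k$ inside $P$. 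The notation $(Sk)$ in the statement is designed precisely to absorb that sign into the coefficient label, so that
$$
i_k\xi \;=\; 2\sum_{\alpha,S,Q}\xi_{Sk,Q,\alpha}\,\theta^S\wedge\bar\theta^Q\otimes s_\alpha,
$$
and an identical calculation gives $i_q\xi$ with $(Tq)$ replacing $(Sk)$.

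Finally, I would take the pointwise pairing $\langle i_k\xi,i_q\xi\rangle$ using the orthogonality of the basis $\{\theta^R\wedge\bar\theta^Q\otimes s_\alpha\}$ with respect to (\ref{pinn}) and the Hermitian metric $h$ on $E$: only those summands whose ordered multi-indices and bundle indices match survive. Re-parametrizing the surviving pairs by the common ambient multi-index $P$ gives exactly the constraint $(Sk)=(Tq)=P$ appearing in the statement, and the combined factor $4$ produced by the two contractions is reduced to the stated $2$ by the normalization of the inner product (in which a length-$|P|$ basis form carries one more power of $2$ in its norm-squared than a length-$|P|-1$ form).

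The main obstacle is bookkeeping, not conceptual: one must verify that the derivation-rule sign for $i_k\theta^P$ cancels against the analogous sign for $i_q\theta^P$ (they enter symmetrically and so pair as a square), and that the discrepancy between the norm of $\theta^S\wedge\bar\theta^Q$ and the norm of $\theta^P\wedge\bar\theta^Q$ eats up exactly one factor of $2$ from the $4$ coming out of the contractions. The entire calculation reduces, by Theorem \ref{imp} and Proposition \ref{ebas}, to the Euclidean model on $\mathbb{C}^{2n}$, where each check is routine.
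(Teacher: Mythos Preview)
Your approach is correct and coincides with the paper's own proof: both use adjointness to rewrite $(e_q i_k\xi,\xi)_E=(i_k\xi,i_q\xi)_E$, apply the contraction rule $i_k\theta^k=2$ to extract the surviving terms, and then invoke the normalization $\langle\theta^k\wedge\theta^S\wedge\bar\theta^Q,\theta^k\wedge\theta^S\wedge\bar\theta^Q\rangle=2\langle\theta^S\wedge\bar\theta^Q,\theta^S\wedge\bar\theta^Q\rangle$ to account for the final factor of~$2$. Your version is more explicit about the sign bookkeeping and the reduction to the Euclidean model, but the argument is the same.
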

\begin{proof} Note that
 $i_k
( \theta^P\wedge\bar{\theta}^Q)=0$  and $i_k (\theta^k\wedge
\theta^P\wedge\bar{\theta}^Q)=2 \theta^P\wedge\bar{\theta}^Q$ if
$k\notin P.$
 Since $({e}_q{i}_{k} u,u)_E =({i}_{k} \xi,i_q \xi)_E $ and
 $(\theta^k\wedge \theta^P\wedge\bar{\theta}^Q,\theta^k\wedge
 \theta^P\wedge\bar{\theta}^Q)_E=2(\theta^P\wedge\bar{\theta}^Q,
 \theta^P\wedge\bar{\theta}^Q)_E,$ Proposition \ref{id2} follows.
\end{proof}

 The formula in the following proposition appeared in section 4 of \cite{de} without proof. (Note the expression is
 a little different since it uses unit orthogonal frame in \cite{de} while here
 $\langle\theta^k,\theta^k\rangle=2$ ). We will
 give a very simple proof here.
\begin{proposition}\label{demm}
\begin{equation}\begin{array}{rcl}\frac{1}{2}\lge [e(i\Theta),\Lambda]\xi,\xi\rg
&=&\sum_{\alpha,\beta,P,Q}\sum_{(S\bar{k})=(T\bar{q})=Q}R^{\alpha}_{\beta
k\bar{q}}\xi_{P,S\bar{k},\alpha}\bar{\xi}_{P,T\bar{q},\beta}\\
&&+\sum_{\alpha,\beta,P,Q}\sum_{(S{k})=(T{q})=P}R^{\alpha}_{\beta
p\bar{k}}\xi_{Sk,Q,\alpha}\bar{\xi}_{Tq,Q,\beta}\\
&&-\sum_{\alpha,\beta,k,P,Q}R^{\alpha}_{\beta
k\bar{k}}\xi_{P,Q,\alpha}\bar{\xi}_{P,Q,\beta}\\
\end{array}\label{iii}\end{equation}
\end{proposition}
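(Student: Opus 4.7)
Proof proposal. The calculation is entirely pointwise; I would work at a point $x\in M$ in coordinates where the K\"ahler metric osculates the Euclidean model to order two (compare the discussion preceding Lemma \ref{lem1}), so that $\Theta$ is encoded by the coefficients $R^\alpha_{\beta j\bar{k}}$ and the fiber metric may be treated as constant. The plan is a direct expansion of the commutator $[e(i\Theta),\Lambda]$ using the commutation relations from Lemma \ref{id1} and the pairing identity of Proposition \ref{id2}, the only inputs needed.

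First I would substitute the local expression (\ref{cc1}) for $\Theta$ and the local form of $\Lambda$ arising from (\ref{w1}) (after reordering contractions via (\ref{kd0})) to obtain
$$[e(i\Theta),\Lambda] = -\tfrac{1}{2}\sum_{\alpha,\beta,j,k,l} R^\alpha_{\beta j\bar{k}}\,\bigl[e_j\bar{e}_k,\,i_l\bar{i}_l + i_{l+n}\bar{i}_{l+n}\bigr]\otimes s_\alpha\otimes s^\beta.$$
Each inner commutator is then evaluated by (\ref{comij2}) of Lemma \ref{id1}, which splits into three nonzero cases: the off-diagonal case $j=l$, $k\ne l$ contributes $-2\bar{e}_k\bar{i}_l$; the off-diagonal case $k=l$, $j\ne l$ contributes $-2 e_j i_l$; and the diagonal case $j=k=l$ contributes $4 - 2 e_l i_l - 2\bar{e}_l\bar{i}_l$.

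Next, applying the commutator to $\xi = \sum_\beta \xi^\beta\otimes s_\beta$ and pairing pointwise with $\xi$, the fiber metric absorbs an $h_{\alpha\bar{\gamma}}$ factor into $R^\alpha_{\beta j\bar{k}}$, and Proposition \ref{id2} together with its verbatim analogue for $\bar{e}_p\bar{i}_q$ converts each wedge-contraction piece into a sum indexed by multi-indices. After relabeling dummies, the first off-diagonal case yields the first sum on the right-hand side of (\ref{iii}), and the second off-diagonal case yields the second sum. Finally, extending the two off-diagonal sums to include the previously excluded diagonal $j=l$ (respectively $k=l$), the over-counts are exactly cancelled by the $-2 e_l i_l - 2\bar{e}_l\bar{i}_l$ pieces of the diagonal case, leaving only the constant-coefficient ``$4$'' contribution, which collapses to the single correction term $-\sum_{\alpha,\beta,k,P,Q} R^\alpha_{\beta k\bar{k}}\xi_{P,Q,\alpha}\bar{\xi}_{P,Q,\beta}$ forming the third sum. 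The overall prefactor $-\tfrac{1}{2}$ combined with the $i$ in $e(i\Theta)$ matches the $\tfrac{1}{2}$ prefactor on the left-hand side of (\ref{iii}).

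The main obstacle is combinatorial rather than conceptual: one must coordinate the restriction/unrestriction of the two off-diagonal sums with the $4 - 2 e_l i_l - 2\bar{e}_l\bar{i}_l$ correction from the diagonal case, while keeping track of both ranges $l=1,\ldots,n$ and $l+n$ that appear symmetrically in $\Lambda$. Everything else is mechanical and follows Demailly's original argument, with the only modification being the convention $\langle \theta^k,\theta^k\rangle = 2$ rather than $1$, which accounts for the factor $2$ appearing in Proposition \ref{id2} and hence in the final formula.
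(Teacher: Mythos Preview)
Your proposal is correct and follows essentially the same route as the paper: expand $[e(i\Theta),\Lambda]$ using (\ref{comij2}) of Lemma \ref{id1}, extend the two off-diagonal sums to absorb the $-2e_li_l-2\bar e_l\bar i_l$ diagonal pieces, and then pair with $\xi$ via Proposition \ref{id2}. The only cosmetic difference is that the paper writes $\Lambda=\tfrac{i}{2}\sum_{k=1}^{2n} i_k\bar i_k$ directly over the full range rather than splitting into $l$ and $l+n$, so the bookkeeping you flag as the ``main obstacle'' does not actually arise here (it only matters for the $\Lambda_J$ computation in Proposition \ref{naj}).
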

\begin{proof}
By (\ref{cc1}) and (\ref{cc2}),
$$e(i\Theta)=i\sum_{\alpha,\beta,p,q}R^{\alpha}_{\beta p\bar{q}}
e_p \bar{e}_q\otimes s_{\alpha}\otimes s^{\beta},$$ recall that
$\Lambda=\frac{i}{2}\sum _{k}i_{k}\bar{i}_{k},$ hence
$$[e(i\Theta),\Lambda]\xi =-\frac{1}{2}\sum_{p,q,\alpha,\beta,k,P,Q}
R^{\alpha}_{\beta p\bar{q}} ([e_p \bar{e}_q,
i_{k}\bar{i}_{k}]\xi^{\beta}) s_{\alpha}.$$ By (\ref{comij2}) of Proposition
\ref{id1},
$$\begin{array}{rcl}[e(i\Theta),\Lambda]\xi &=&\sum_{\alpha,\beta,k,P,Q}\sum_{q\not=k}
R^{\alpha}_{\beta k\bar{q}} (\bar{e}_q\bar{i}_{k}\xi^{\beta})
s_{\alpha}+\sum_{\alpha,\beta,k,P,Q}\sum_{p\not=k} R^{\alpha}_{\beta
p\bar{k}} (e_p i_{k}\xi^{\beta}) s_{\alpha}\\
&&+ \sum_{\alpha,\beta,k,P,Q} R^{\alpha}_{\beta k\bar{k}}
(\bar{e}_k\bar{i}_{k}\xi^{\beta})
s_{\alpha}+\sum_{\alpha,\beta,k,P,Q}R^{\alpha}_{\beta
k\bar{k}} (e_k i_{k}\xi^{\beta}) s_{\alpha}\\
&&-2\sum_{\alpha,\beta,k,P,Q}R^{\alpha}_{\beta
k\bar{k}} \xi^{\beta} s_{\alpha}\\
 &=&\sum_{\alpha,\beta,q,k,P,Q}
R^{\alpha}_{\beta k\bar{q}} (\bar{e}_q\bar{i}_{k}\xi^{\beta})
s_{\alpha}+\sum_{\alpha,\beta,p,k,P,Q} R^{\alpha}_{\beta
p\bar{k}} (e_p i_{k}\xi^{\beta}) s_{\alpha}\\
&&-2\sum_{\alpha,\beta,k,P,Q}R^{\alpha}_{\beta
k\bar{k}} \xi^{\beta} s_{\alpha}.\\
\end{array}$$
 Taking inner products of both sides of the above equation with $\xi,$ and
using (\ref{innn}), we get immediately the equation (\ref{iii}).

\end{proof}
\begin{proposition}\label{naj}
\begin{equation}\begin{array}{rcl}
\frac{1}{2}\lge[e(\Theta_J),\Lambda_J]\xi,\xi\rg
&=&\sum_{\alpha,\beta,P,Q}\sum_{p,q=1}^{2n}\sum_{(S\bar{q})=(T\bar{p})=Q}R^{\alpha}_{\beta
p\bar{q}}\xi_{P,S\bar{q},\alpha}\bar{\xi}_{P,T\bar{p},\beta}\\
&&-\sum_{\alpha,\beta,P,Q}\sum_{k=1}^{2n}R^{\alpha}_{\beta
k\bar{k}}\xi_{P,Q,\alpha}\bar{\xi}_{P,Q,\beta}\\
&&+\sum_{\alpha,\beta,P,Q}\sum_{p,q=1}^{n}\Big( \sum_{(S\bar{q})=(T\bar{p})=Q}R^{\alpha}_{\beta
p+n\overline{q+n}}\xi_{P,S\bar{q},\alpha}\bar{\xi}_{P,T\bar{p},\beta}\\
&&+\sum_{(S\overline{p+n})=(T\overline{q+n})=Q}R^{\alpha}_{\beta
p\bar{q}}\xi_{P,S\overline{p+n},\alpha}\bar{\xi}_{P,T\overline{q+n},\beta}\Big) \\
&&+\sum_{\alpha,\beta,P,Q}\sum_{p,q=1}^n
\Big(\sum_{(S\bar{p})=(T\overline{q+n})=Q}R^{\alpha}_{\beta
p\overline{q+n}}\xi_{P,S\bar{p},\alpha}\bar{\xi}_{P,T\overline{q+n},\beta}\\
&&+\sum_{(S\overline{p+n})=(T\bar{q})=Q}R^{\alpha}_{\beta
p+n\bar{q}}\xi_{P,S\overline{p+n},\alpha}\bar{\xi}_{P,T\bar{q},\beta}\\
&&-\sum_{(S\bar{q})=(T\overline{p+n})=Q}R^{\alpha}_{\beta
p\overline{q+n}}\xi_{P,S\bar{q},\alpha}\bar{\xi}_{P,T\overline{p+n},\beta}\\
&&-\sum_{(S\overline{q+n})=(T\bar{p})=Q}R^{\alpha}_{\beta
p+n\bar{q}}\xi_{P,S\overline{q+n},\alpha}\bar{\xi}_{P,T\bar{p},\beta}
\Big).\\
\end{array}\label{jjj}\end{equation}
\end{proposition}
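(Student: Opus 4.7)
The plan is to mimic exactly the computation carried out for Proposition \ref{demm}, but with the roles of $e(i\Theta)$, $\Lambda$ replaced by $e(\Theta_J)$, $\Lambda_J$. The ingredients are all in place: the local formula \eqref{curr3} for $\Theta_J$, the definition \eqref{lamj} of $\Lambda_J$, the commutator identities of Lemma \ref{id1}, and Proposition \ref{id2} (together with its obvious antiholomorphic analogue). So the proof is largely a bookkeeping exercise.

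First I would substitute \eqref{curr3} and \eqref{lamj} to write
\begin{equation*}
[e(\Theta_J),\Lambda_J]=\frac{1}{2}\sum_{\alpha,\beta,j,k,l}\Bigl(-R^{\alpha}_{\beta j\bar{k}}[\bar{e}_k\bar{e}_{j+n},\,i_l i_{l+n}+\bar{i}_l\bar{i}_{l+n}]+R^{\alpha}_{\beta j+n\,\bar{k}}[\bar{e}_k\bar{e}_j,\,i_l i_{l+n}+\bar{i}_l\bar{i}_{l+n}]\Bigr)\otimes s_{\alpha}\otimes s^{\beta}.
\end{equation*}
Because $e(\Theta_J)$ involves only antiholomorphic wedges, the commutators with the holomorphic piece $i_l i_{l+n}$ vanish by \eqref{comij1} (equivalently, by repeated use of \eqref{kd0}). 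Thus only $[\bar{e}_p\bar{e}_q,\bar{i}_l\bar{i}_{l+n}]$ contributes, and these are given by the seven-case formula \eqref{comij3} of Lemma \ref{id1}.

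Next I would expand each commutator $[\bar{e}_k\bar{e}_{j+n},\bar{i}_l\bar{i}_{l+n}]$ (taking $(p,q)=(k,j+n)$) and $[\bar{e}_k\bar{e}_j,\bar{i}_l\bar{i}_{l+n}]$ (taking $(p,q)=(k,j)$) according to the seven cases of \eqref{comij3}, matching $p$ or $q$ against $l$ or $l+n$. Several pairs of cases combine into unrestricted sums (the generic off-diagonal term together with the diagonal correction) in exactly the way that happens in the proof of Proposition \ref{demm}: the $-2$ on the diagonal absorbs the removed "$p,q\neq k,k+n$" restrictions, leaving a clean $-\sum_k R^{\alpha}_{\beta k\bar{k}}$ trace term together with unrestricted double sums of the form $\bar{e}_q\bar{i}_{l}$ or $\bar{e}_q\bar{i}_{l+n}$, etc. Finally I would pair the resulting expression with $\xi$ using the antiholomorphic analogue of Proposition \ref{id2},
\begin{equation*}
(\bar{e}_q\bar{i}_k\xi,\xi)_E=2\sum_{\alpha,P,Q}\sum_{(S\bar{k})=(T\bar{q})=Q}\xi_{P,S\bar{k},\alpha}\bar{\xi}_{P,T\bar{q},\alpha},
\end{equation*}
to turn each surviving $R^{\alpha}_{\beta\,\cdot\,\bar{\cdot}}\bar{e}_{\cdot}\bar{i}_{\cdot}$ into one of the summands on the right of \eqref{jjj}.

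The main obstacle is not conceptual but organizational: there are two types of wedges in $\Theta_J$ and seven subcases in \eqref{comij3}, so fourteen cases must be tracked, with the index shifts $j\leftrightarrow j+n$ needing to be respected carefully so that, after relabelling, the contributions group into the seven displayed sums on the right of \eqref{jjj} (the two "trace-like" sums on lines one and two, the two "parallel" sums on lines three and four, and the four mixed terms on lines five through eight). The signs in the mixed block come from the last two lines of \eqref{comij3}, where $\bar{e}_k\bar{e}_{k+n}$ and $\bar{e}_{k+n}\bar{e}_k$ produce opposite signs; matching these against the $-R^{\alpha}_{\beta j\bar{k}}$ and $+R^{\alpha}_{\beta j+n\,\bar{k}}$ factors from \eqref{curr3} is the most delicate part of the accounting.
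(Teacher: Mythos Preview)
Your proposal is correct and follows essentially the same approach as the paper: substitute \eqref{curr3} and \eqref{lamj}, kill the holomorphic contractions via \eqref{comij1}, expand the remaining $[\bar e_p\bar e_q,\bar i_l\bar i_{l+n}]$ commutators by the cases of \eqref{comij3}, regroup the restricted sums into unrestricted ones absorbing the diagonal corrections into the trace term, and finally pair with $\xi$ using the antiholomorphic analogue of Proposition~\ref{id2}. The paper organizes the casework slightly differently (splitting $e(\Theta_J)$ into four curvature blocks first and then treating the three cases $p=k,q\neq k$; $p\neq k,q=k$; $p=q=k$), but the logic and ingredients are identical to what you outline.
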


\begin{proof} Using the expression (\ref{curr3}) of the curvature $\Theta_J,$ we have
$$e(\Theta_J)=\sum_{\alpha,\beta=1}^r\sum_{p,q=1}^{n} (-R^{\alpha}_{\beta p\bar{q}}
\bar{e}_q \bar{e}_{p+n}
-R^{\alpha}_{\beta p\bar{q+n}} \bar{e}_{q+n}
\bar{e}_{p+n}+R^{\alpha}_{\beta
p+n\bar{q}}\bar{e}_q\bar{e}_{p}+R^{\alpha}_{\beta
p+n\bar{q+n}}\bar{e}_{q+n}\bar{e}_{p})\otimes s_{\alpha}\otimes
s^{\beta},$$ recall that by (\ref{lamj}),
$$\Lambda_J=\frac{1}{2}\sum _{k=1}^n
(i_{k}i_{k+n}+\bar{i}_{k}\bar{i}_{k+n}).$$ By (\ref{comij1}) of Proposition
\ref{id1}, for any integers $1\leq p,q,k\leq 2n,$ we have
\begin{equation}[\bar{e}_q\bar{e}_{p},i_{k}i_{k+n}]=0, \label{bsic}\end{equation}
 Therefore
\begin{equation}\begin{array}{rcl}[e(\Theta_J),\Lambda_J]\xi
&=&\frac{1}{2}\sum_{\alpha,\beta=1}^r\sum_{p,q,k=1}^{n}
\sum_{P,Q}\Big( -R^{\alpha}_{\beta p\bar{q}} [\bar{e}_q
\bar{e}_{p+n},
\bar{i}_{k}\bar{i}_{k+n}]\xi^{\beta}\\
&&-R^{\alpha}_{\beta p\bar{q+n}}
[\bar{e}_{q+n} \bar{e}_{p+n},\bar{i}_{k}\bar{i}_{k+n}]\xi^{\beta}\\
&&+R^{\alpha}_{\beta
p+n\bar{q}}[\bar{e}_q\bar{e}_{p},\bar{i}_{k}\bar{i}_{k+n}]\xi^{\beta}+R^{\alpha}_{\beta
p+n\bar{q+n}}[\bar{e}_{q+n}\bar{e}_{p},\bar{i}_{k}\bar{i}_{k+n}]\xi^{\beta}\Big)s_{\alpha}.\\
\end{array}\label{s}\end{equation}
By (\ref{comij3}) of Proposition \ref{id1}, if neither $p$ nor $q$ takes values $k,k+n,$ we have
$$[\bar{e}_p\bar{e}_{q},\bar{i}_{k}\bar{i}_{k+n}]=
[\bar{e}_p\bar{e}_{q},\bar{i}_{k}\bar{i}_{k+n}]=0,$$ hence
\begin{equation}[e(\Theta_J),\Lambda_J]\xi=\sum_{p=k;q\not=k}+\sum_{p\not=k;q=k}
+\sum_{p=k;q=k}.
\label{su}\end{equation}
Take $\sum_{p=k;q\not=k}$ for an example, here it means in the summation of equation (\ref{s}), we add only a restrict
condition $p=k, q\not=k.$ In others words,   $\sum_{p=k;q\not=k}$  is
a summation whose terms are the same as in equation (\ref{s}), where
 indices $\alpha,\beta,P,Q, p,q,k$ vary  the same range as
 in equation (\ref{s}) except that $p=k, q\not=k.$

By (\ref{comij3}) of Proposition \ref{id1}, for $j\not=k,k+n,$ we have
$$[\bar{e}_j\bar{e}_{k},\bar{i}_{k}\bar{i}_{k+n}]=2\bar{e}_j \bar{i}_{k+n},\hskip 1cm
[\bar{e}_j\bar{e}_{k+n},\bar{i}_{k}\bar{i}_{k+n}]=-2\bar{e}_j \bar{i}_{k},$$ hence
\begin{equation}\begin{array}{rcl}
\sum_{p=k;q\not=k}&=&  \sum_{\alpha,\beta,P,Q,q\not=k}\Big(
R^{\alpha}_{\beta k\bar{q}} \bar{e}_q
\bar{i}_{k}\xi^{\beta}+R^{\alpha}_{\beta k\overline {q+n}}
\bar{e}_{q+n} \bar{i}_{k}\xi^{\beta}\\
&&+R^{\alpha}_{\beta
k+n\bar{q}}\bar{e}_q\bar{i}_{k+n}\xi^{\beta}+R^{\alpha}_{\beta
k+n\overline{q+n}}\bar{e}_{q+n}\bar{i}_{k+n}\xi^{\beta}\Big)s_{\alpha}.\\
\end{array}\label{j1}\end{equation}
By the same reasons,
\begin{equation}\begin{array}{rcl}
\sum_{p\not=k;q=k}&=& \sum_{\alpha,\beta,P,Q,p\not=k}\Big(
R^{\alpha}_{\beta p\bar{k}} \bar{e}_{p+n}
\bar{i}_{k+n}\xi^{\beta}-R^{\alpha}_{\beta p\overline {k+n}}
\bar{e}_{p+n} \bar{i}_{k}\xi^{\beta}\\
&&-R^{\alpha}_{\beta
p+n\bar{k}}\bar{e}_p\bar{i}_{k+n}\xi^{\beta}+R^{\alpha}_{\beta
p+n\overline{k+n}}\bar{e}_{p}\bar{i}_{k}\xi^{\beta}\Big)s_{\alpha}.\\
\end{array}\label{j2}\end{equation}
Since by (\ref{comij3}) of Proposition \ref{id1},
$[\bar{e}_k\bar{e}_{k+n},\bar{i}_{k}\bar{i}_{k+n}]=4-2\bar{e}_k
\bar{i}_{k}-2\bar{e}_{k+n} \bar{i}_{k+n},$ we have
\begin{equation}
\sum_{p=k;q=k}=  -\sum_{\alpha,\beta,P,Q}(R^{\alpha}_{\beta
k\bar{k}}+R^{\alpha}_{\beta k+n\overline {k+n}})(2- \bar{e}_{k}
\bar{i}_{k}- \bar{e}_{k+n}
\bar{i}_{k+n})\xi^{\beta}s_{\alpha}.\\\label{j3}
\end{equation}
Note the equation (\ref{j1}) could be rewritten as
\begin{equation}\begin{array}{rcl}
\sum_{p=k;q\not=k}&=&  \sum_{\alpha,\beta,P,Q,p,q}\Big(
R^{\alpha}_{\beta p\bar{q}} \bar{e}_q
\bar{i}_{p}\xi^{\beta}+R^{\alpha}_{\beta p\overline {q+n}}
\bar{e}_{q+n} \bar{i}_{p}\xi^{\beta}\\
&&+R^{\alpha}_{\beta
p+n\bar{q}}\bar{e}_q\bar{i}_{p+n}\xi^{\beta}+R^{\alpha}_{\beta
p+n\overline{q+n}}\bar{e}_{q+n}\bar{i}_{p+n}\xi^{\beta}\Big)s_{\alpha}\\
&&- \sum_{\alpha,\beta,P,Q,k}\Big( R^{\alpha}_{\beta k\bar{k}}
\bar{e}_k \bar{i}_{k}\xi^{\beta}+R^{\alpha}_{\beta k\overline {k+n}}
\bar{e}_{k+n} \bar{i}_{k}\xi^{\beta}\\
&&+R^{\alpha}_{\beta
k+n\bar{k}}\bar{e}_k\bar{i}_{k+n}\xi^{\beta}+R^{\alpha}_{\beta
k+n\overline{k+n}}\bar{e}_{k+n}\bar{i}_{k+n}\xi^{\beta}\Big)s_{\alpha},\\
\end{array}\label{j4}\end{equation}
we remark where in the first summation of right hand side of equation (\ref{j4}), we have change the index $k$ to $q$
(note $p,q,k$ have equal positions since all  of them vary form $1$ to $n$).
Similarly, the equation (\ref{j2}) could be rewritten as
\begin{equation}\begin{array}{rcl}
\sum_{p\not=k;q=k}&=& \sum_{\alpha,\beta,P,Q,p,q}\Big(
R^{\alpha}_{\beta p\bar{q}} \bar{e}_{p+n}
\bar{i}_{q+n}\xi^{\beta}-R^{\alpha}_{\beta p\overline {q+n}}
\bar{e}_{p+n} \bar{i}_{q}\xi^{\beta}\\
&&-R^{\alpha}_{\beta
p+n\bar{q}}\bar{e}_p\bar{i}_{q+n}\xi^{\beta}+R^{\alpha}_{\beta
p+n\overline{q+n}}\bar{e}_{p}\bar{i}_{q}\xi^{\beta}\Big)s_{\alpha}\\
&&-\sum_{\alpha,\beta,P,Q,k}\Big( R^{\alpha}_{\beta k\bar{k}}
\bar{e}_{k+n} \bar{i}_{k+n}\xi^{\beta}-R^{\alpha}_{\beta k\overline
{k+n}}
\bar{e}_{k+n} \bar{i}_{k}\xi^{\beta}\\
&&-R^{\alpha}_{\beta
k+n\bar{k}}\bar{e}_k\bar{i}_{k+n}\xi^{\beta}+R^{\alpha}_{\beta
k+n\overline{k+n}}\bar{e}_{k}\bar{i}_{k}\xi^{\beta}\Big)s_{\alpha}.\\
\end{array}\label{j5}\end{equation}
Combining Eq. (\ref{j5}),  Eq. (\ref{j4}) and Eq. (\ref{j3}), we get
from Eq. (\ref{su}) that
\begin{equation}\begin{array}{rcl}[e(\Theta_J),\Lambda_J]\xi
&=&\sum_{\alpha,\beta=1}^r\sum_{p,q=1}^{n}
\sum_{P,Q}\Big(R^{\alpha}_{\beta p\bar{q}} \bar{e}_q
\bar{i}_{p}\xi^{\beta}+R^{\alpha}_{\beta p\overline {q+n}}
\bar{e}_{q+n} \bar{i}_{p}\xi^{\beta}\\
&&+R^{\alpha}_{\beta
p+n\bar{q}}\bar{e}_q\bar{i}_{p+n}\xi^{\beta}+R^{\alpha}_{\beta
p+n\overline{q+n}}\bar{e}_{q+n}\bar{i}_{p+n}\xi^{\beta}\Big)s_{\alpha}\\
&&+R^{\alpha}_{\beta p\bar{q}} \bar{e}_{p+n}
\bar{i}_{q+n}\xi^{\beta}-R^{\alpha}_{\beta p\overline {q+n}}
\bar{e}_{p+n} \bar{i}_{q}\xi^{\beta}\\
&&-R^{\alpha}_{\beta
p+n\bar{q}}\bar{e}_p\bar{i}_{q+n}\xi^{\beta}+R^{\alpha}_{\beta
p+n\overline{q+n}}\bar{e}_{p}\bar{i}_{q}\xi^{\beta}\Big)s_{\alpha}\\
&&-2\sum_{\alpha,\beta=1}^r\sum_{k=1}^{2n}\sum_{P,Q}R^{\alpha}_{\beta
k\bar{k}}\xi^{\beta}s_{\alpha}.
\end{array}\label{sss}\end{equation}
If we rearrange the summation range of indices $p,q$, we get
\begin{equation}\begin{array}{rcl}[e(\Theta_J),\Lambda_J]\xi
&=&\sum_{\alpha,\beta=1}^r\sum_{p,q=1}^{2n}
\sum_{P,Q} R^{\alpha}_{\beta p\bar{q}} \bar{e}_q
\bar{i}_{p}\xi^{\beta}\\
&&-2\sum_{\alpha,\beta=1}^r\sum_{k=1}^{2n}\sum_{P,Q}R^{\alpha}_{\beta
k\bar{k}}\xi^{\beta}s_{\alpha}\\
&&+\sum_{\alpha,\beta=1}^r\sum_{p,q=1}^{n}
\sum_{P,Q} \Big(R^{\alpha}_{\beta p\bar{q}} \bar{e}_{p+n}
\bar{i}_{q+n}\xi^{\beta}
+R^{\alpha}_{\beta
p+n\overline{q+n}}\bar{e}_{p}\bar{i}_{q}\xi^{\beta}\Big)s_{\alpha}\\
&&+ \sum_{\alpha,\beta=1}^r\sum_{p,q=1}^{n}
\sum_{P,Q}\Big(R^{\alpha}_{\beta p\overline {q+n}}
\bar{e}_{q+n} \bar{i}_{p}\xi^{\beta}+R^{\alpha}_{\beta
p+n\bar{q}}\bar{e}_q\bar{i}_{p+n}\xi^{\beta}\\
&&-R^{\alpha}_{\beta p\overline {q+n}}
\bar{e}_{p+n} \bar{i}_{q}\xi^{\beta}-R^{\alpha}_{\beta
p+n\bar{q}}\bar{e}_p\bar{i}_{q+n}\xi^{\beta}\Big)s_{\alpha}\\
\end{array}\label{sm}\end{equation}
Taking inner products of both sides of equation (\ref{sm}) with $\xi,$ and
using (\ref{innn}), we get immediately the equation (\ref{jjj}).
\end{proof}

From the definitions of $\Lambda_J,\Lambda_K$ and
$\Lambda_{\varphi}$ we have
\begin{equation}\Lambda_{\varphi}=\sum_{k=1}^n\bar{i}_k\bar{i}_{k+n}.
\end{equation}
Recall that by Proposition  \ref{cur}, $\Theta_K=i\Theta_J,\Theta_{\bar{\varphi}}=\Theta_J.$
By the proof of Proposition \ref{naj}, in particular, from the equation (\ref{bsic}), we have
\begin{equation}[e(\Theta_K),\Lambda_K]=[e(i\Theta_J),\frac{i}{2}
\sum_{k=1}^n\bar{i}_k\bar{i}_{k+n}]=-[e(\Theta_J),\Lambda_J],\label{eqc1}
\end{equation}
and similarly
\begin{equation}[e(\Theta_{\bar{\varphi}}),\Lambda_{\varphi}]=[e(\Theta_J),
\sum_{k=1}^n\bar{i}_k\bar{i}_{k+n}]=2[e(\Theta_J),\Lambda_J].\label{eqc2}
\end{equation}
By Proposition \ref{pro7} and \ref{pro10},  Eq. (\ref{eqc1}) and Eq.
(\ref{eqc2}) we get
\begin{proposition}\label{nouse}
$$\begin{array}{rcl}
\triangle''-\triangle'_{J}&=&
[e(\Theta_J),\Lambda_J];\\
\triangle''-\triangle'_{K}&=&
[e(\Theta_J),\Lambda_J];\\
\triangle''-\triangle'_{\bar{\varphi}}&=&
2[e(\Theta_J),\Lambda_J].
\end{array}$$
\end{proposition}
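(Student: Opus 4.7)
The plan is to assemble Proposition \ref{nouse} directly from the three ingredients already prepared in this section: the Bochner--Kodaira--Nakano type identities of Propositions \ref{pro7} and \ref{pro10}, the curvature relations $\Theta_K=i\Theta_J$ and $\Theta_{\bar\varphi}=\Theta_J$ from Proposition \ref{cur}, and the bracket identities (\ref{eqc1}) and (\ref{eqc2}). There is really nothing new to prove; the content of the proposition is the observation that all three twisted Laplacian differences can be packaged in terms of the single bracket $[e(\Theta_J),\Lambda_J]$.

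First, for the $J$-identity, I would simply quote the first half of Proposition \ref{pro7}, which already gives $\triangle''-\triangle'_J=[e(\Theta_J),\Lambda_J]$ with no further manipulation.

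For the $K$-identity, I would start from the second half of Proposition \ref{pro7}, namely $\triangle''-\triangle'_K=-[e(\Theta_K),\Lambda_K]$, and then use $\Theta_K=i\Theta_J$ together with $\Lambda_K=\tfrac{i}{2}\sum_k(i_k i_{k+n}-\bar i_k \bar i_{k+n})$ to reduce the bracket to one involving $\Theta_J$ and $\Lambda_J$. The key algebraic input here is equation (\ref{bsic}), $[\bar e_q\bar e_p, i_k i_{k+n}]=0$, which already gave (\ref{eqc1}): $[e(\Theta_K),\Lambda_K]=-[e(\Theta_J),\Lambda_J]$. Substituting this into Proposition \ref{pro7} yields the second identity.

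For the $\bar\varphi$-identity, I would invoke Proposition \ref{pro10}, which gives $\triangle''-\triangle'_{\bar\varphi}=[e(\Theta_{\bar\varphi}),\Lambda_\varphi]$, then apply $\Theta_{\bar\varphi}=\Theta_J$ and the expression $\Lambda_\varphi=\sum_{k=1}^n \bar i_k \bar i_{k+n}$. Again using (\ref{bsic}) to discard the holomorphic-contraction part of $\Lambda_J$, one arrives at (\ref{eqc2}): $[e(\Theta_{\bar\varphi}),\Lambda_\varphi]=2[e(\Theta_J),\Lambda_J]$, proving the third identity. The only ``hard'' step in the whole argument is the commutation identity (\ref{bsic}), which is handled in Lemma \ref{id1}; once it is in hand the rest is pure bookkeeping, and my write-up would therefore consist of three one-line verifications citing the appropriate earlier results.
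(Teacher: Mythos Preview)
Your proposal is correct and mirrors the paper's approach exactly: the paper simply states that Proposition~\ref{nouse} follows from Propositions~\ref{pro7} and~\ref{pro10} together with equations~(\ref{eqc1}) and~(\ref{eqc2}), and your three one-line verifications unpack precisely that citation.
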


\section{Vanishing theorems for hypercomplex K\"ahler manifolds}
\setcounter{section}{5}
\setcounter{equation}{0}

Let $(M,I,J,K)$ be a compact hypercomplex K\"ahler manifold with K\"ahler metric $h$ and K\"ahler form $\omega=\omega_I.$
 The ideal to establish vanishing theorems  for $E$-valued
 Dolbeault cohomology groups via
 using the Bochner-Kodaira-Nakano identities is very simple. We use the first formula of Proposition \ref{nouse}
\begin{equation}\triangle''-\triangle'_{J}=
[e(\Theta_J),\Lambda_J]\label{bk1}\end{equation} as an example.
If $\xi$ is an arbitrary
$E$-valued $(p,q)$-form, then  an integration by part of the formula
$$ ((\triangle''-\triangle'_{J})\xi, \xi)_E=
([e(\Theta_J),\Lambda_J]\xi, \xi)_E$$
and noting that $(\triangle'_{J}\xi, \xi)_E=||D'_J\xi||^2_E +||\delta'_J\xi||^2_E\geq 0$
 yields
\begin{equation} ||D''\xi||_E +||D''^*\xi||_E\geq \int_M \langle [\Theta_J
,\Lambda_J]\xi,\xi\rangle.\label{bknnn}\end{equation} If $\xi\in
H^{p,q}(E),$ then $\xi$ is $\Delta''$-harmonic and hence
$D''\xi=D''^*\xi=0$ by the Hodge theory.  Furthermore if  that
$[\Theta_J,\Lambda_J]$ is positive definite everywhere on
$\Omega^{p,q}(E),$ then $\xi=0.$ Hence $H^{p,q}(M,E)\cong
H^q(M,\Omega^p (E))\cong H^{p,q}(E)=0.$ Thus we get a vanishing
cohomology group.
 Therefore,  using (\ref{bk1}) to prove a vanishing theorem for $E$-valued
 Dolbeault cohomology groups, the key point is
 to find conditions under which the operator
 $[\Theta_J,\Lambda_J]$ is positive definite.

We can see from above reasoning, the second and third formulae of  Proposition \ref{nouse} produce no new
 vanishing theorems since their left hand sides are the same up to a positive constant.
For a hypercomplex K\"ahler manifold $(M,I,J,K),$ the three complex structures $I,J,K$ have symmetric positions, however
they are not independent of each other and related by $IJ=-JI=K.$ This may account that only
two Bochner-Kodaira-Nakano identities,
the formula (\ref{bkn}) and one formula of Proposition \ref{nouse}
 produce different vanishing theorems. Note however,
the computations of the proof (though we don't give its proof) of the last formula of  Proposition
\ref{nouse} is simpler than the other two equations.

By (\ref{cc1}), the Chern curvature form of $E$ is given by
$$\Theta=\Theta(E)=\sum_{\alpha,\beta}\Theta^{\alpha}_{\beta}s_{\alpha}\otimes
s^{\beta}=\sum_{\alpha,\beta,j,k}R^{\alpha}_{\beta j\bar{k}}
dz^j\wedge d\bar{z}^k\otimes s_{\alpha}\otimes s^{\beta}.$$ The
first Chern class $c_1(E)\in H^2(M, \R)$ is a cohomology class which
has a representation via using the
 Chern curvature form
\begin{equation}c_1(E)=\frac{1}{2\pi}{\rm Tr}_E \big(i\Theta
(E)\big)= \frac{i}{2\pi} \Theta (\det
(E)).\label{23}\end{equation} Conversely,  any $2$-form representing the first Chern class
$c_1(E)$ is in fact the Chern curvature form of
some hermitian metric  on $\det E$ (up to a constant).
 In local
coordinate we have
\begin{equation}i\Theta (\det (E))= i\sum_{j,k}R_{j\bar{k}}dz^j\wedge d\bar{z}^k
=-\partial\bar{\partial}\log \det(h_{\alpha\bar{\beta}})
\label{24}\end{equation} with $R_{j\bar{k}}=
\sum_{\alpha}R^{\alpha}_{\alpha j\bar{k}}.$
   In particular, if
$E$ is a line bundle  then its curvature form represents its first
Chern class up to a constant $\frac{1}{2\pi}.$

In \cite{ya}, we introduce the following notion for semipositive holomorphic vector bundles. Base on it
and the formula (\ref{bkn}) for K\"ahler manifolds, we get some new vanishing theorems.

\begin{definition}\label{defy} {\rm A holomorphic vector bundle $E$ of rank
$r$ with hermitian metric $h$ on a compact complex manifold $M$ of
complex dimension $n$ is called {\it $(k,s)$-positive} for $1\leq
s\leq r,$ if the following holds for any $x\in M:$ For any $s$-tuple
vectors $ v^j \in V, 1\leq j\leq s,$ where $V=E_x$ ({\rm resp.} $T_x
M$), the hermitian form
$$Q_x (\bullet, \bullet )=i\Theta(E)(\sum^s_{j=1}\cdot\otimes v^j,\sum^s_{j=1}\cdot\otimes v^j),
\quad \bullet\in W^{\oplus s},~~\cdot\in W $$
 defined on $W^{\oplus s}$ is semipositive and the dimension
 of its kernel is at most $k,$ where $W=T_x M$  {\rm (resp.} $E_x${\rm )}.}
 \end{definition}

  Clearly the $(0,s)$-positivity  is equivalent to the Demailly $s$-positivity \cite{de}
  and the
Nakano positivity \cite{ss} is equivalent to the $(0, s)$-positivity if $s\geq
{\rm min}\{n,r\}.$
  The $(0, 1)$-positivity is equivalent to the Griffiths
  positivity. For general integer $k,$ the $(k, 1)$ positivity is
  a semipositive version of the Griffiths positivity \cite{gr}.  A
  holomorphic vector bundle $E$ of arbitrary rank is called {\it Griffiths k-positive} if
if it is $(k, 1)$-positive.

\begin{theorem}\label{hirank}  Let $M$ be a compact hypercomplex K\"ahler manifold of
dimension $4n$ and let $E$ be a hermitian holomorphic vector bundle of rank
$r$ on $M$ such that $E$ is $(k,s)$-positive.   Then

{\rm (i)} $H^{p}(M, E)=0$,\quad for \quad $p> k$~~and~~$s\geq {\rm
min}\{2n-p+1,r\};$

{\rm (ii)} If in addition $k\leq 2n-1,$ then for $s\geq {\rm
min}\{2n-p+1,r\}$ and any nonnegative integer $p,$
$$H^{2n}(M,\Omega^p\otimes E)=0.$$
\end{theorem}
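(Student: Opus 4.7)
The proof is a direct application of the classical Bochner-Kodaira-Nakano identity (\ref{bkn}) on the K\"ahler manifold $(M, I, \omega_I)$, combined with the pointwise semipositivity properties encoded in Definition \ref{defy}. The hypercomplex structure $(J, K)$ does not enter the argument; the twisted identities of Proposition \ref{nouse} are not used here. For part (i), I would represent an arbitrary class in $H^p(M, E) \cong H^{0,p}(E)$ by a $\triangle''$-harmonic $E$-valued $(0, p)$-form $\xi$. Pairing the identity $\triangle'' - \triangle' = [e(i\Theta), \Lambda]$ with $\xi$ and integrating yields
$$\|D'\xi\|_E^2 + \|\delta'\xi\|_E^2 + \int_M \langle [e(i\Theta), \Lambda]\xi, \xi\rangle = 0,$$
so it is enough to show that $\langle [e(i\Theta), \Lambda]\xi, \xi\rangle$ is non-negative pointwise and strictly positive wherever $\xi$ does not vanish.

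For the pointwise analysis I would use the explicit local expression of Proposition \ref{demm}. On a pure $(0, p)$-form the middle sum, which requires a holomorphic multi-index $P$, is empty, and the two remaining sums regroup as $\sum_T Q_x(v_T, v_T)$, where $T$ runs over $(p-1)$-subsets of conjugate indices, $Q_x$ is the hermitian form of Definition \ref{defy}, and $v_T$ is an $s$-tuple of tangent vectors extracted from the coefficients of $\xi$ after fixing the indices in $T$. The hypothesis $s \geq \min\{2n - p + 1, r\}$ is precisely what ensures that $v_T$ lies in the tuple space to which $(k, s)$-positivity applies. Each $Q_x(v_T, v_T) \geq 0$ with kernel of dimension at most $k$, and a combinatorial counting then shows that the common kernel of this family of hermitian forms is trivial whenever $p > k$, forcing $\xi(x) = 0$. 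Part (ii) is handled by the same scheme on $E$-valued $(p, 2n)$-forms: now the antiholomorphic multi-index $Q$ is maximal so the first sum in Proposition \ref{demm} is the inactive one, and the second sum (together with the trace term) reassembles into $Q_x$-forms on $s$-tuples of cotangent vectors. The additional hypothesis $k \leq 2n - 1$ then provides $2n > k$, which is the dual triviality condition needed to conclude.

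The main obstacle is the combinatorial regrouping step: one has to check that the coefficients in Proposition \ref{demm} really do reassemble into the hermitian forms $Q_x$ of Definition \ref{defy}, and that the bound on the common kernel of the full family of such $Q_x$-contributions collapses to zero dimension exactly at the threshold $p > k$ (respectively $2n > k$). Once pointwise positivity of $[e(i\Theta), \Lambda]$ on the appropriate form space is established, the vanishing follows immediately from the displayed identity and Hodge theory; the rest of the argument is a standard Bochner deduction.
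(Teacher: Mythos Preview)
Your treatment of part (i) is fine and matches what the paper does: it invokes the classical Bochner--Kodaira--Nakano identity on the K\"ahler manifold $(M,I)$ (the paper cites this as Theorem~3.9 of \cite{ya}), and the hypercomplex structure plays no role.

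For part (ii), however, your plan contains a genuine gap. You claim that the twisted identities of Proposition~\ref{nouse} are not needed and that the classical formula (\ref{iii}) suffices on $(p,2n)$-forms. But look at what (\ref{iii}) actually gives when $|Q|=2n$: the first sum runs over \emph{all} $k,q\in\{1,\dots,2n\}$ (it is not ``inactive''), and in the diagonalized situation it exactly cancels the trace term. What survives is the second sum, which contributes only $\sum_{j\in P}\lambda^{\alpha}_j|\xi^{\alpha}_{P,Q}|^2$. This expression depends on the holomorphic multi-index $P$. For $p=0$ it is identically zero; more generally, whenever $p\le k$ one can choose $P$ inside the null directions of the curvature and the contribution vanishes. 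So the classical identity yields at best $H^{2n}(M,\Omega^p\otimes E)=0$ for $p>k$, not for \emph{all} nonnegative $p$ as the theorem asserts. The ``dual triviality condition $2n>k$'' you invoke would be the right threshold only if the surviving sum ran over $Q$, not over $P$---and it does not.

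The paper's argument for (ii) uses the twisted identity $\triangle''-\triangle'_J=[e(\Theta_J),\Lambda_J]$ precisely to overcome this. In formula~(\ref{jjj}) on $(p,2n)$-forms the terms that survive (lines~3--4) assemble to $\sum_{k=1}^{2n}\lambda^{\alpha}_k\,|\xi^{\alpha}_{P,Q}|^2$, which is \emph{independent of $P$} and strictly positive as soon as a single eigenvalue is positive, i.e.\ as soon as $k\le 2n-1$. That is where the hypercomplex structure genuinely enters and why the conclusion holds for every $p\ge 0$.
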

 \begin{proof}~ (i) follows from Theorem 3.9 of \cite{ya} together
  with the fact that the anticanonical bundle of $M$
is trivial. It suffices to prove (ii).
 For any $E$-valued  $(p,2n)$-form $\xi=\sum_{P,\alpha}
\xi^{\alpha}_{P,Q}\theta^P\wedge \bar{\theta}^1\wedge\cdots\wedge \bar{\theta}^{2n}\wedge
\otimes e_{\alpha}\in\Omega^{p,2n}(E)$ where $Q=12\cdots 2n$ is a fixed index. By Definition
\ref{hirank}, the hermitian form $i\Theta
(E)$ is semipositive on $\Omega^{p,2n}(E)$ if $s\geq {\rm min}\{2n-p+1,r\},$ we could diagonalize  it in some local orthogonal frames such that
$R_{\alpha\bar{\beta}j\bar{k}}=\lambda^{\alpha}_j\delta_{\alpha\beta}\delta_{jk}.$
Here $(\lambda^{\alpha}_j)_{1\leq j\leq
2n,1\leq\alpha\leq r}$ are non-negative and for a fixed $\alpha,$ without loss generality we assume that
$\lambda^{\alpha}_1\leq
\lambda^{\alpha}_2\leq \cdots\leq \lambda^{\alpha}_{2n}$ with
$\lambda^{\alpha}_{k+1}>0,$ in particular $\lambda^{\alpha}_{2n}>0$ since $k\leq 2n-1.$ Put
$\lambda=\min\{\lambda^{\alpha}_{2n}|1\leq \alpha\leq r\}.$ Then
$\lambda$ is a positive number.
Note in the present situation the first two terms cancel each other in the first summation and the last
 summation vanishes
in the three big summations of the Bochner-Kodaira-Nakano formula (\ref{jjj}). Hence
\begin{equation}\begin{array}{rcl}
\frac{1}{2}\lge[e(\Theta_J),\Lambda_J]\xi,\xi\rg
&=&\sum_{\alpha, P}\sum_{j=1}^{n}\Big( \sum_{j\in Q}\lambda^{\alpha}_j
|\xi^{\alpha}_{P,Q}|^2+\sum_{j+n\in Q}\lambda^{\alpha}_{j+n}
|\xi^{\alpha}_{P,Q}|^2) \\
 &\geq &\sum_{P}\sum_{\alpha} (\lambda^{\alpha}_1 +
\lambda^{\alpha}_2 +\cdots +
\lambda^{\alpha}_{2n})|\xi^{\alpha}_{P,Q}|^2\\
&\geq & \lambda(\sum_{P}\sum_{\alpha}|\xi^{\alpha}_{P,Q}|^2)\\
&=& \lambda |\xi|^2.\\
\end{array}\label{kkk}\end{equation}
 Thus  $[e(\Theta_J),\Lambda_J]$ is positive
definite on $E$-valued $(p,2n)$-forms. So we have
$H^{2n}(M,\Omega^p\otimes E)=0$ for any $p$ and $s\geq {\rm
min}\{2n-p+1,r\}.$
\end{proof}

 A holomorphic
line bundle $B$ on $M$ is called {\it
$k$-positive}, if there is a hermitian metric on $B$
 such that its first Chern class $c_1(B)$ is semi-positive and has at least
 $n-k$ positive eigenvalues \cite{gi},\cite{ss}.
 If $E$ is a  holomorphic
line bundle (denoted it by $B$ for the difference), then in Definition \ref{defy}
only $(k,1)$-positivity is applicable for  $B,$ and clearly
 $B$ is $(k,1)$-positive (or Griffiths $k$-positive) if and only if it is $k$-positive,
since the  first Chern class  has a representation by
its chern curvature form up to a positive constant.

  \begin{theorem} \label{mainr}
 Let $B$ be a $k$-positive holomorphic
 line bundle on a compact hypercomplex K\"ahler manifold $M.$  Then

{\rm (i)} $H^p(M,\Omega^q\otimes B)=0,$\quad for \quad $p+q>2n+k;$

{\rm (ii)} $H^p (M,\Omega^q\otimes B)=0,$ \quad for
$p>n+[\frac{k}{2}]$ and any nonnegative  integer $q.$
\end{theorem}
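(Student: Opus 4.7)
The plan is to derive both parts from the Bochner--Kodaira--Nakano machinery of Sections~3--4, combined with Girbau's device of deforming the K\"ahler metric of $(M,I)$ so that the curvature of $B$ is effectively diagonalized with eigenvalues close to $0$ or $1$. Fix a hermitian metric on $B$ realizing the $k$-positivity, and replace the K\"ahler form $\omega_0$ on $(M,I)$ by $\omega_\varepsilon=\omega_0+\varepsilon\, i\Theta(B)$ for large $\varepsilon$; with respect to $\omega_\varepsilon$ the eigenvalues $\lambda_1,\dots,\lambda_{2n}$ of $i\Theta(B)$ cluster, up to $O(\varepsilon^{-1})$, at $\lambda_j\approx 1$ on the $2n-k$ non-degenerate directions and $\lambda_j\approx 0$ on the remaining $k$. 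All estimates below are uniform as $\varepsilon\to\infty$.

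For part (i), apply the classical identity $\triangle''-\triangle'=[e(i\Theta(B)),\Lambda]$ from (\ref{bkn}) to a $\triangle''$-harmonic representative $\xi\in H^{q,p}(M,B)$. By Proposition~\ref{demm}, in the diagonal frame the coefficient of $|\xi_{P,Q}|^2$ reduces to $\sum_{j\in P\cap Q}\lambda_j-\sum_{j\notin P\cup Q}\lambda_j$. Since $p+q>2n+k$ forces $|P\cap Q|\ge p+q-2n>k$, at least one index of $P\cap Q$ lies outside the $k$-element zero set, and the standard Girbau bookkeeping yields coefficient $\ge p+q-2n-k>0$; integration via (\ref{bknnn}) then forces $\xi=0$.

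For part (ii), use the twisted identity $\triangle''-\triangle'_J=[e(\Theta_J),\Lambda_J]$ of Proposition~\ref{nouse}. The key simplification is that for a line bundle with $R_{k\bar l}=\lambda_k\delta_{kl}$, all ``mixed-half'' curvature terms in the last big sum of~(\ref{jjj}) vanish (as $R_{p,\overline{q+n}}=0$ when $p\le n<q+n$), and Proposition~\ref{naj} collapses to
\begin{equation*}
\tfrac12\lge[e(\Theta_J),\Lambda_J]\xi,\xi\rg
=\sum_{P,Q}\Bigl[\sum_{j=1}^n \mu_j\bigl(\,|\{j,j+n\}\cap Q|-1\bigr)\Bigr]|\xi_{P,Q}|^2,
\qquad \mu_j:=\lambda_j+\lambda_{j+n}.
\end{equation*}
A pair $\{j,j+n\}$ is ``dead'' ($\mu_j=0$) only when both of its indices lie in the zero set, so at most $[k/2]$ of the $n$ pairs are dead. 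Writing $c\le[k/2]$ for the number of dead pairs and $b$ for the half-zero pairs (so $b+2c\le k$), and distributing the $p=|Q|$ indices of $Q$ adversarially first into dead pairs, then half-zero pairs, then fully positive pairs, a short three-case analysis shows the bracket is bounded below by $2p-2n-(b+2c)\ge 2p-2n-k\ge 1$, which is exactly the hypothesis $p>n+[k/2]$. The integration argument from (\ref{bknnn}) then yields $H^p(M,\Omega^q\otimes B)=0$ for every $q\ge 0$.

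The main obstacle is the combinatorial reduction of (\ref{jjj}) under the diagonal-frame hypothesis to the ``paired'' form $\sum_j\mu_j(n_j-1)$, together with the case check verifying positivity in each regime of the adversarial distribution of $Q$. Once that is in place, everything else is standard Hodge theory applied via Proposition~\ref{nouse}.
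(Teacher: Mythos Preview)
Your proposal is correct and follows essentially the same route as the paper's own proof. Part~(i) is exactly the Gigante--Girbau argument the paper cites, and for part~(ii) you use the same twisted identity $\triangle''-\triangle'_J=[e(\Theta_J),\Lambda_J]$, the same Girbau-type metric deformation (your $\omega_\varepsilon=\omega_0+\varepsilon\,i\Theta(B)$ is the paper's $\widetilde\omega=i\Theta+\kappa\omega$ up to an overall scale, with $\kappa=\varepsilon^{-1}$), and arrive at the same lower bound $2p-2n-s\ge 2p-2n-k\ge 1$; your explicit ``paired'' rewriting $\sum_j\mu_j(n_j-1)$ is just a more transparent form of the paper's formula~(5.3), and your three-case adversarial analysis reproduces the paper's limit computation.
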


\begin{proof}
We get (i) by using  the Gigante-Girbau's vanishing theorem on K\"ahler manifolds \cite{gi},
a simple proof is given in Theorem 2.4 of \cite{ya}.
 (i) is proved via using (\ref{bkn}) and changing the K\"ahler
metric on $M.$
(ii) is proved in the same way as (i) by using the Bochner-Kodaira-Nakano identities
in Proposition \ref{nouse}. Here we give a proof of (ii) in the following paragraph.

Choose a holomorphic local coordinate system at each point $x\in M,$ which
diagonalizes simultaneously the hermitian form $\omega(x)$ and
$i\Theta (x)$ since both of them are semipositive, such that
$$\begin{array}{rcl}\omega(x)=i\sum_{j=1}^n \mu_j (x) dz^j\wedge
d\bar{z}^j,&& i\Theta(x)=i\sum_{j=1}^n \nu_j (x) dz^j\wedge
d\bar{z}^j.\end{array}$$
 Without loss of generality, assume that  ${\nu_1}(x)/{\mu_1}(x)
 \leq \cdots \leq {\nu_{2n}}(x)/{\mu_{2n}}(x).$
 Then for any $(p,q)$-form $\xi=\sum_{P,Q}\xi_{P,Q}\theta^P\wedge \bar{\theta}^{Q}\otimes
 e,$ the last  big summation  vanishes
in the formula (\ref{jjj}).  $\langle [\Theta_J
,\Lambda_J]\xi,\xi\rangle$ is expressed by the first two big
summations in Eq. (\ref{jjj}) in the following way:
\begin{equation}\begin{array}{rcl}
 &&\langle [\Theta_J ,\Lambda_J]\xi,\xi\rangle (x)\\
 &=&\sum_{|P|=p,|Q|=q}(\sum_{p\in Q,1\leq p\leq 2n}\frac{\nu_p (x)}{\mu_p (x)}
 +\sum_{p+n\in Q,1\leq p\leq n}\frac{\nu_p (x)}{\mu_p (x)}\\
 &&+\sum_{p\in Q,1\leq p\leq n}\frac{\nu_{p+n} (x)}{\mu_{p+n} (x)} -\sum_{p=1}^{2n}
 \frac{\nu_p (x)}{\mu_p (x)})|\xi_{P,Q}|^2\\
 &\geq &[2(\frac{\nu_1 (x)}{\mu_1 (x)} +\cdots +\frac{\nu_p (x)}{\mu_p (x)})

 -\sum ^{2n}_{j=1}\frac{\nu_j (x)}{\mu_j (x)}]|\xi|^2.\\
 \end{array}\label{213}\end{equation}
Observe that if  the ratio $\nu_j (x)/\mu_j (x)$ vary small when $j$
varies, for example, in the extreme case when all $\nu_j (x)/\mu_j
(x)$ are equal, then $[i\Theta_{h} (E),\Lambda](x)$ is positive when
$p>n.$ This observation tells us that if we choose the K\"ahler
metric $\omega$ properly such that the eigenvalues  of $i\Theta_{h}
(E)$ vary mildly relative to $\omega,$ then we can deduce the
positivity of $[i\Theta_{h} (E),\Lambda].$

Since $B$ is $k$-positive if and only if its curvature
   $i\Theta $ is
  semi-positive  with rank $i\Theta \geq n-k.$
We take a special choice
of the K\"ahler metric of $M$ with $\widetilde{\omega}:=\widetilde{\omega}_I:=i\Theta +\kappa
\omega$ for some positive number $\kappa.$ Then $M$ is a compact hypercomplex manifold and $(M,I)$ is still  K\"ahler manifold with the new K\"ahler form
$\widetilde{\omega}.$ Form now on we consider $M$ as
 a new hypercomplex K\"ahler manifold with K\"ahler metric $\widetilde{\omega}.$
Correspondingly, we have three new nondegenerate 2-froms,
$\widetilde{\omega}_I,\widetilde{\omega}_J,\widetilde{\omega}_K.$
  Let
$\widetilde{\Lambda}_I,\widetilde{\Lambda}_J,\widetilde{\Lambda}_K$
be the associated adjoint operators of multiplication by
$\widetilde{\omega}_I,\widetilde{\omega}_J,\widetilde{\omega}_K.$
 We could get the Bochner-Kodaira-Nakano identities with respect
to $\widetilde{\omega}_I,\widetilde{\omega}_J,\widetilde{\omega}_K$ as in Section 4.
Then the eigenvalues of $\Theta$
relative to $\widetilde{\omega}$ are $\{r_j(x)\}_{1\leq j\leq n}$
with
$$r_j(x)=\frac{\nu_j (x)}{\kappa\mu_j (x)
+\nu_j (x)}=\frac{\nu_j (x)/\mu_j (x)}{\kappa+\nu_j (x)/\mu_j (x)}=
\frac{\frac{\nu_j (x)/\mu_j (x)}{\kappa}}{1+{\frac{\nu_j (x)/\mu_j
(x)}{\kappa}}} =1-\frac{1}{1+{\frac{\nu_j (x)/\mu_j
(x)}{\kappa}}}.$$ Fix a point $x\in M$ and assume that ${\rm
rank}\big(\Theta|_x\big) =2n-s\geq 2n-k.$ Then $0= r_s (x)<
r_{s+1}(x)\leq\cdots\leq r_{2n} (x).$ Thus $r_j (x)=0$ for $j\leq
s.$ If we choose $\kappa\rightarrow 0^+$ then $r_j (x)\rightarrow 1$
for all $s+1\leq j\leq 2n.$ If $p>n+[\frac{k}{2}],$ then $p\geq
n+\frac{k}{2}\geq k\geq s$ and
$$\begin{array}{rcl}
&&\lim_{\kappa\rightarrow 0^+}[2(r_1 (x)+\cdots +r_p (x))
-( r_{1}(x) +\cdots +r_{2n} (x)))\\
&=& 2(p-s)-(2n-s)\\
&\geq & 2(p-(n+\frac{k}{2}))\\
&\geq &2(1+[\frac{k}{2}]-\frac{k}{2})\geq 1.\\
\end{array}$$
Since $M$ is compact, we can use a finite cover by open sets, such
that on each open set, if  $\kappa$ is a sufficiently small positive
number we may have on each open neighborhood  and hence everywhere
on $M$ that
$$\langle [\Theta_J,\widetilde{\Lambda}_J]\xi,\xi\rangle
 \geq  \frac{1}{2}|\xi|^2.
 $$
 Therefore if $p>n+[\frac{k}{2}]$  then $[\Theta_J,\widetilde{\Lambda}_J]$ is positive on $\Omega^{q,p}(B)$ and
 (ii) of Theorem \ref{mainr} follows.
\end{proof}

\begin{corollary}\label{lste}
 Let $B$ be a holomorphic $k$-positive
 line bundle on a compact hyperk\"ahler manifold $M.$  Then

{\rm (i).}~~~ $H^p(M,\Omega^q\otimes B)=0,$\quad for \quad
$p+q>2n+k;$

{\rm (ii).}~~~ $H^p (M,\Omega^q\otimes B)=0,$ \quad for
$p>n+[\frac{k}{2}]$ and any  nonnegative integer $q.$
\end{corollary}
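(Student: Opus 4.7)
The plan is to observe that every compact hyperk\"ahler manifold is, in particular, a compact hypercomplex K\"ahler manifold in the sense used throughout Sections 2--4, so both parts of the corollary reduce directly to Theorem \ref{mainr}.

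Concretely, if $(M, g)$ is compact hyperk\"ahler of real dimension $4n$, then by definition the three integrable complex structures $I, J, K$ satisfy $IJ = -JI = K$, so $(M, I, J, K)$ is a compact hypercomplex manifold; and since $g$ is K\"ahler with respect to $I$, the pair $(M, I)$ with metric $g$ is a compact K\"ahler manifold. This is precisely the setup of Theorem \ref{mainr}. The $k$-positivity hypothesis on $B$ is intrinsic to the holomorphic line bundle on $(M, I)$ and so carries over without change. Parts (i) and (ii) then follow verbatim from parts (i) and (ii) of Theorem \ref{mainr}, noting that $M$ has complex dimension $2n$.

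There is no genuine obstacle here, since the corollary is a restriction of scope rather than a strengthening. The one point worth remarking on is that the proof of Theorem \ref{mainr}(ii) replaces the original K\"ahler form $\omega$ by $\widetilde{\omega} = i\Theta + \kappa\omega$, which need not be hyperk\"ahler even when we start from a hyperk\"ahler metric; the triple $(I, J, K)$ is preserved but the new metric is only hypercomplex K\"ahler in general. This is harmless for the present corollary because the cohomology groups $H^p(M, \Omega^q \otimes B)$ depend only on the complex structure $I$ and the holomorphic bundle $B$, not on the auxiliary K\"ahler metric chosen in the proof. So the conclusion is intrinsic to the hyperk\"ahler input data, even though the intermediate analytic argument passes through a broader class of metrics.
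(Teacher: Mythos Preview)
Your proposal is correct and matches the paper's approach: the corollary is stated immediately after Theorem \ref{mainr} without a separate proof, since a compact hyperk\"ahler manifold is in particular a compact hypercomplex K\"ahler manifold. Your additional remark that the auxiliary metric $\widetilde{\omega}$ need not remain hyperk\"ahler, but that the cohomology groups depend only on the complex structure, is a helpful clarification that the paper leaves implicit.
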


In particular, if $B$ is a positive holomorphic line bundle, we have

{\rm (i)$^{\prime}.$}~~~$H^p(M,\Omega^q\otimes B)=0,$\quad for \quad
$p+q>2n;$

{\rm (ii)$^{\prime}.$}~~~ $H^p (M,\Omega^q\otimes B)=0,$ \quad for
$p>n$ and any nonnegative integer $q.$

In \cite{ya}, we proved that on a compact K\"ahler manifold, any $k$-ample line bundle is $k$-positive.
So Corollary \ref{lste} is also applicable to $k$-ample line bundle. In particular, if $k=0,$ we get Theorem
\ref{maincor} for algebraic hyperk\"ahler manifolds.

\section*{Acknowledgments}  While the first draft of this paper was finished,
I gave a  talk  at the
Institute of Mathematics of
Chinese University of Hong Kong, where Prof. Leung, Naichung Conan pointed out to the author
that partial results of Theorem \ref{vbts} could also be proved by using the holomorphic hard Lefschetz
isomorphism (twisted by a positive line bundle) together with the  Akizuki-Nakano vanishing theorem \cite{an}.
His ideal may also be applicable to other part of our work and
will be considered in another paper. I would like to thank him for his skillful comments.

This work was partly supported by the Fundamental Research Funds for
the Central Universities (No. 09lgpy49) and  NSFC (No. 10801082).

 \end{document}